\newtheorem{theorem}{Theorem}[section]
\newtheorem{lemma}[theorem]{Lemma}
\newtheorem{prop}[theorem]{Proposition}
\newtheorem{cor}[theorem]{Corollary}
\theoremstyle{remark}
\newtheorem{definition}[theorem]{Definition}
\newtheorem{remark}[theorem]{Remark}
\def\i{\iota}
\def\N{{\mathbb N}}
\def\C{{\mathbb C}}
\def\R{{\mathbb R}}
\def\TT{{\mathbb T}}
\def\Z{{\mathbb Z}}
\def\A{{\mathcal{A}}}
\def\B{{\mathcal{B}}}
\def\D{{\mathcal{D}}}
\def\K{{\mathcal{K}}}
\def\T{{\mathcal{T}}}
\def\I{{\mathcal{I}}}
\def\J{{\mathcal{J}}}
\def\L{{\mathcal{L}}}
\def\c{{\bf c}}
\newcommand{\id}{\operatorname{id}}
\newcommand{\piso}{\operatorname{piso}}
\newcommand{\iso}{\operatorname{iso}}
\newcommand{\End}{\operatorname{End}}
\newcommand{\Prim}{\operatorname{Prim}}
\newcommand{\whitesquare}{\hfill $\whitesquare$\newline\vspace{0.4cm}}
\def\newspan{\operatorname{span}}
\numberwithin{equation}{section}
\begin{document}

\title[The composition series of ideals of the partial-isometric crossed product]
{The composition series of ideals of the partial-isometric crossed product by semigroup of endomorphisms}

\author[Sriwulan Adji]{Sriwulan Adji}
\email{sriwulan.adji@gmail.com}

\author[Saeid Zahmatkesh]{Saeid Zahmatkesh}
\email{zahmatkesh.s@gmail.com}



\subjclass[2010]{Primary 46L55}
\keywords{$C^*$-algebra, endomorphism, semigroup, partial isometry, crossed product, primitive ideal, hull-kernel closure}

\begin{abstract}
Let $\Gamma^{+}$ be the positive cone in a totally ordered abelian group $\Gamma$,
and $\alpha$ an action of $\Gamma^{+}$ by extendible endomorphisms of a $C^{\ast}$-algebra $A$.
Suppose $I$ is an extendible $\alpha$-invariant ideal of $A$.
We prove that the partial-isometric crossed product ${\mathcal I}:=I\times_{\alpha}^{\piso}\Gamma^{+}$ embeds naturally as an ideal of
$A\times_{\alpha}^{\piso}\Gamma^{+}$, such that the quotient is the partial-isometric crossed product of the quotient algebra.
We claim that this ideal ${\mathcal I}$ together with the kernel of a natural homomorphism
$\phi: A\times_{\alpha}^{\piso}\Gamma^{+}\rightarrow A\times_{\alpha}^{\iso}\Gamma^{+}$
gives a composition series of ideals of $A\times_{\alpha}^{\piso}\Gamma^{+}$ studied by Lindiarni and Raeburn.
\end{abstract}
\maketitle

\section{Introduction}
Let $(A,\Gamma^{+},\alpha)$ be a dynamical system consisting of the positive cone $\Gamma^{+}$ in a totally ordered abelian group $\Gamma$, and an action
$\alpha:\Gamma^{+}\rightarrow \End A$ of $\Gamma^{+}$ by extendible endomorphisms of a $C^*$-algebra $A$.
A covariant representation of the system $(A,\Gamma^{+},\alpha)$ is defined for which the semigroup of endomorphisms $\{\alpha_{s}: s\in\Gamma^{+}\}$ are implemented by partial isometries, and then the associated partial-isometric crossed product $C^*$-algebra $A\times_{\alpha}^{\piso}\Gamma^{+}$, generated by a universal covariant representation, is characterized by the property that its nondegenerate representations are in a bijective correspondence with covariant representations of the system. This generalizes the covariant isometric representation theory: the theory that uses isometries to represent the semigroup of endomorphisms in a covariant representation of the system. We denoted by $A\times_{\alpha}^{\iso}\Gamma^{+}$ for the corresponding isometric crossed product.

Suppose $I$ is an extendible $\alpha$-invariant ideal of $A$, then $a+I \mapsto \alpha_{x}(a)+I$ defines an action of $\Gamma^{+}$
by extendible endomorphisms of the quotient algebra $A/I$.
It is well-known that the isometric crossed product $I\times_{\alpha}^{\iso}\Gamma^{+}$ sits naturally as an ideal in $A\times_{\alpha}^{\iso}\Gamma^{+}$ such that
$(A\times_{\alpha}^{\iso}\Gamma^{+})/ (I\times_{\alpha}^{\iso}\Gamma^{+})\simeq A/I\times_{\alpha}^{\iso}\Gamma^{+}$.
We show that this result is valid for the partial-isometric crossed product.

Moreover if $\phi: A\times_{\alpha}^{\piso}\Gamma^{+}\rightarrow A\times_{\alpha}^{\iso}\Gamma^{+}$ is the natural homomorphism given by the canonical
universal covariant isometric representation of $(A,\Gamma^{+},\alpha)$ in $A\times_{\alpha}^{\iso}\Gamma^{+}$, then $\ker\phi$ together with
the ideal $I\times_{\alpha}^{\piso}\Gamma^{+}$  give a composition series of ideals of $A\times_{\alpha}^{\piso}\Gamma^{+}$, from which we
recover the structure theorems of \cite{LR}.
Let us now consider the framework of \cite{LR}. A system that consists of the $C^{*}$-subalgebra $A:=B_{\Gamma^{+}}$ of $\ell^{\infty}(\Gamma^{+})$ spanned by the functions $1_{s}$ satisfying
\[ 1_{s}(t)=\left\{ \begin{array}{ll} 1 & \mbox{ if } t \ge s \\ 0 & \mbox{ otherwise, } \end{array} \right. \]
and the action $\tau:\Gamma^{+}\rightarrow \End B_{\Gamma^{+}}$ given by the translation on $\ell^{\infty}(\Gamma^{+})$.
We choose an extendible $\tau$-invariant ideal $I$ to be the subalgebra $B_{\Gamma^{+},\infty}$ spanned by $\{1_{x}-1_{y} : x<y \in \Gamma^{+}\}$.
Then the composition series of ideals of $B_{\Gamma^{+}}\times_{\tau}^{\piso}\Gamma^{+}$, that is given by
the two ideals $\ker\phi$ and $B_{\Gamma^{+},\infty}\times_{\tau}^{\piso}\Gamma^{+}$, produces
the large commutative diagram in \cite[Theorem 5.6]{LR}.
This result shows that the commutative diagram in \cite[Theorem 5.6]{LR} exists for any totally ordered abelian subgroup (not only for subgroups of $\R$), and that we understand clearly where the diagram comes from.

Next, if we consider a specific semigroup $\Gamma^{+}$ such as the additive semigroup $\N$ in the group of integers $\Z$, then the large commutative diagram gives a clearer information about the ideals structure of $\c\times_{\tau}^{\piso}\N$.
We can identify that the left-hand and top exact sequences in diagram \cite[Theorem 5.6]{LR} are indeed equivalent to the extension of the algebra $\K(\ell^{2}(\N,\c_{0}))$ of compact operators on the Hilbert module $\ell^{2}(\N,\c_{0})$ by $\K(\ell^{2}(\N))$ provided by the algebra $\K(\ell^{2}(\N,\c))$ of compact operators on $\ell^{2}(\N,\c)$.
Moreover it is known that $\Prim \K(\ell^{2}(\N,\c))\simeq \Prim (\K(\ell^{2}(\N)) \otimes \c) \simeq \Prim \c$ is homeomorphic to $\N\cup\infty$.
Together with a knowledge about the primitive ideal space of the Toeplitz $C^*$-algebra generated by the unilateral shift, our theorem on the composition series of ideals of $\c\times_{\tau}^{\piso}\N$ provides a complete description of the topology on the primitive ideal space of $\c\times_{\tau}^{\piso}\N$.

We begin with a section containing background material about the partial-isometric crossed product by semigroups of extendible endomorphisms.
In Section 3, we prove the existence of a short exact sequence of partial-isometric crossed products,
which generalizes \cite[ Theorem 2.2]{Adji-Abbas} of the semigroup $\N$.
Then we consider this and the other natural exact sequence described earlier in \cite{AZ},
to get the composition series of ideals in $A\times^{\piso}_{\alpha}\Gamma^{+}$.

We proceed to Section 4 by applying our results in Section 3 to the distinguished system $(B_{\Gamma^{+}},\Gamma^{+},\tau)$ and the extendible $\tau$-invariant ideal
$B_{\Gamma^{+},\infty}$ of $B_{\Gamma^{+}}$.
It can be seen from our Proposition \ref{BGamma} that the large commutative diagram of \cite[Theorem 5.6]{LR} remains valid for any subgroup $\Gamma$
of a totally ordered abelian group.
Finally in the last section we describe the topology of primitive ideal space of $\c\times^{\piso}_{\tau}\N$ by using this large diagram.

\section{Preliminaries}
A bounded operator $V$ on a Hilbert space $H$ is called an isometry if $\|V(h)\|=\|h\|$ for all $h\in H$, which is equivalent to $V^{*}V=1$.
A bounded operator $V$ on a Hilbert space $H$ is called a \emph{partial isometry} if it is isometry on $(\ker V)^{\perp}$.
This is equivalent to $VV^{*}V=V$.
If $V$ is a partial isometry then so is the adjoint $V^*$, where as for an isometry $V$, the adjoint $V^{*}$ may not be an isometry
unless $V$ is unitary.
Associated to a partial isometry $V$, there are two orthogonal projections $V^{*}V$ and $VV^{*}$ on the initial space $(\ker V)^{\perp}$
and on the range $VH$ respectively.
In a $C^*$-algebra $A$, an element $v\in A$ is called an isometry if $v^{*}v=1$ and a partial isometry if $vv^{*}v=v$.

An \emph{isometric representation} of $\Gamma^{+}$ on a Hilbert space $H$ is a map $S:\Gamma^{+}\rightarrow B(H)$ which satisfies
$S_{x}:=S(x)$ is an isometry, and $S_{x+y}=S_{x}S_{y}$ for all $x,y\in\Gamma^{+}$.
So an isometric representation of $\N$ is determined by a single isometry $S_{1}$.
Similarly a \emph{partial-isometric representation} of $\Gamma^{+}$ on a Hilbert space $H$ is a map $V:\Gamma^{+}\rightarrow B(H)$ which satisfies
$V_{x}:=V(x)$ is a partial isometry, and $V_{x+y}=V_{x}V_{y}$ for all $x,y\in\Gamma^{+}$.
Note that the product $VW$ of two partial isometries $V$ and $W$ is a partial isometry precisely when $V^{*}V$ commutes with $WW^{*}$ \cite[Proposition 2.1]{LR}.
Thus a partial isometry $V$ is called a \emph{power partial isometry} if $V^{n}$ is a partial isometry for every $n\in\N$, so a
partial-isometric  representation of $\N$ is determined by a single power partial isometry $V_{1}$.
If $V$ is a partial-isometric representation of $\Gamma^{+}$, then every $V_{x}V_{x}^{*}$ commutes with $V_{t}V_{t}^{*}$, and so does
$V_{x}^{*}V_{x}$ with $V_{t}^{*}V_{t}$.

Now we consider a dynamical system $(A,\Gamma^{+},\alpha)$ consisting of a $C^*$-algebra $A$, an action $\alpha$ of $\Gamma^{+}$ by endomorphisms of $A$ such that
$\alpha_{0}=\id$. Because we deal with non unital $C^*$-algebras and non unital endomorphisms,
we require every endomorphism $\alpha_{x}$ to be extendible to a strictly continuous endomorphism $\overline{\alpha}_{x}$ on
the multiplier algebra $M(A)$ of $A$. This happens precisely when there exists an approximate identity $(a_{\lambda})$ in $A$  and a projection $p_{\alpha_{x}}\in M(A)$ such that $\alpha_{x}(a_{\lambda})$ converges strictly to $p_{\alpha_{x}}$ in $M(A)$.

\begin{definition}
A \emph{covariant isometric representation} of $(A,\Gamma^{+},\alpha)$ on a Hilbert space $H$ is a pair $(\pi,S)$ of
a nondegenerate representation $\pi:A\rightarrow B(H)$ and an isometric representation of $S:\Gamma^{+}\rightarrow B(H)$ such that
$\pi(\alpha_{x}(a))=S_{x}\pi(a) S_{x}^{*}$ for all $a\in A$ and $x\in\Gamma^{+}$.

An \emph{isometric crossed product} of $(A,\Gamma^{+},\alpha)$ is a triple $(B,j_{A},j_{\Gamma^{+}})$ consisting of a $C^*$-algebra $B$,
a canonical covariant isometric representation $(j_{A},j_{\Gamma^{+}})$ in $M(B)$ which satisfies the following
\begin{itemize}
\item[(i)] for every covariant isometric representation $(\pi,S)$ of $(A,\Gamma^{+},\alpha)$ on a Hilbert space $H$, there exists a nondegenerate representation
$\pi\times S: B\rightarrow B(H)$ such that $(\pi\times S) \circ j_{A}=\pi$ and $(\overline{\pi\times S}) \circ j_{\Gamma^{+}}=S$; and
\item[(ii)] $B$ is generated by $j_{A}(A)\cup j_{\Gamma^{+}}(\Gamma^{+})$, we actually have
\[ B=\overline{\newspan}\{j_{\Gamma^{+}}(x)^{*} j_{A}(a) j_{\Gamma^{+}}(y) : x,y \in\Gamma^{+}, a\in A\}. \]
\end{itemize}
Note that a given system $(A,\Gamma^{+},\alpha)$ could have a covariant isometric representation $(\pi,S)$ only with $\pi=0$.
In this case the isometric crossed product yields no information about the system.
If a system admits a non trivial covariant representation, then the isometric crossed product does exist, and it is unique up to isomorphism:
if there is such a covariant isometric representation
$(t_{A},t_{\Gamma^{+}})$ of $(A,\Gamma^{+},\alpha)$ in a $C^*$-algebra $C$, then there is an isomorphism of $C$ onto $B$ which takes
$(t_{A},t_{\Gamma^{+}})$ into $(j_{A},j_{\Gamma^{+}})$.
Thus we write the isometric crossed product $B$ as $A\times_{\alpha}^{\iso}\Gamma^{+}$.
\end{definition}

The partial-isometric crossed product of $(A,\Gamma^{+},\alpha)$ is defined in a similar fashion involving partial-isometries instead of isometries.
\begin{definition}
A \emph{covariant partial-isometric representation} of $(A,\Gamma^{+},\alpha)$ on a Hilbert space $H$ is a pair $(\pi,S)$ of
a nondegenerate representation $\pi:A\rightarrow B(H)$ and a partial-isometric representation $S:\Gamma^{+}\rightarrow B(H)$ of $\Gamma^{+}$ such that
$\pi(\alpha_{x}(a))=S_{x}\pi(a) S_{x}^{*}$ for all $a\in A$ and $x\in\Gamma^{+}$.
See in the Remark \ref{cov2} that this equation implies $S_{x}^{*}S_{x} \pi(a)=\pi(a) S_{x}^{*}S_{x}$ for $a\in A$ and $x\in\Gamma^{+}$.
Moreover, \cite[Lemma 4.2]{LR} shows that every $(\pi,S)$ extends to a partial-isometric covariant representation $(\overline{\pi},S)$ of
$(M(A),\Gamma^{+},\overline{\alpha})$, and the partial-isometric covariance is equivalent to
$\pi(\alpha_{x}(a))S_{x}=S_{x}\pi(a)$ and $S_{x}S_{x}^{*}=\overline{\pi}(\overline{\alpha}_{x}(1))$ for $a\in A$ and $x\in\Gamma^{+}$.

A \emph{partial-isometric crossed product} of $(A,\Gamma^{+},\alpha)$ is a triple $(B,j_{A},j_{\Gamma^{+}})$ consisting of a $C^*$-algebra $B$,
a canonical covariant partial-isometric representation $(j_{A},j_{\Gamma^{+}})$ in $M(B)$ which satisfies the following
\begin{itemize}
\item[(i)] for every covariant partial-isometric representation $(\pi,S)$ of $(A,\Gamma^{+},\alpha)$ on a Hilbert space $H$,
there exists a nondegenerate representation
$\pi\times S: B\rightarrow B(H)$ such that $(\pi\times S) \circ j_{A}=\pi$ and $(\overline{\pi\times S}) \circ j_{\Gamma^{+}}=S$; and
\item[(ii)] $B$ is generated by $j_{A}(A)\cup j_{\Gamma^{+}}(\Gamma^{+})$, we actually have
\[ B=\overline{\newspan}\{j_{\Gamma^{+}}(x)^{*} j_{A}(a) j_{\Gamma^{+}}(y) : x,y \in\Gamma^{+}, a\in A\}. \]
\end{itemize}
Unlike the theory of isometric crossed product:  every system $(A,\Gamma^{+},\alpha)$ admits a non trivial covariant partial-isometric representation
$(\pi,S)$ with $\pi$ faithful \cite[Example 4.6]{LR}.
In fact \cite[Proposition 4.7]{LR} shows that a canonical covariant partial-isometric representation $(j_{A},j_{\Gamma^{+}})$ of $(A,\Gamma^{+},\alpha)$
exists in the Toeplitz algebra $\T_{X}$ associated to a discrete product system $X$ of Hilbert bimodules over
$\Gamma^{+}$, which (i) and (ii) are fulfilled, and it is universal: if there is such a covariant partial-isometric representation
$(t_{A},t_{\Gamma^{+}})$ of $(A,\Gamma^{+},\alpha)$ in a $C^*$-algebra $C$ that satisfies (i) and (ii), then there is an isomorphism of $C$ onto $B$ which takes
$(t_{A},t_{\Gamma^{+}})$ into $(j_{A},j_{\Gamma^{+}})$.
Thus we write the partial-isometric crossed product $B$ as $A\times_{\alpha}^{\piso}\Gamma^{+}$.
\end{definition}

\begin{remark}\label{cov2}
Our special thanks go to B. Kwasniewski for showing us the proof arguments in this remark.
Assuming $(\pi,S)$ is covariant, then by $C^*$-norm equation  we have
$\|\pi(a)S_{x}^{*}-S^{*}_{x}\pi(\alpha_{x}(a))\|=0$, therefore $\pi(a)S_{x}^{*}=S^{*}_{x}\pi(\alpha_{x}(a))$ for all $a\in A$ and $x\in\Gamma^{+}$,
which means that $S_{x}\pi(a)=\pi(\alpha_{x}(a))S_{x}$ for all $a\in A$ and $x\in\Gamma^{+}$.
So $S_{x}^{*}S_{x}\pi(a)=S_{x}^{*}\pi(\alpha_{x}(a))S_{x}=(\pi(\alpha_{x}(a^{*}))S_{x})^{*}S_{x}=
(S_{x}\pi(a^{*}))^{*}S_{x}=\pi(a)S_{x}^{*} S_{x}$.
\end{remark}

\section{The short exact sequence of partial-isometric crossed products}
\begin{theorem}\label{ses-pisoG}
Suppose that $(A\times^{\piso}_{\alpha}\Gamma^{+},i_{A},V)$ is the partial-isometric crossed product of a dynamical system
$(A,\Gamma^{+},\alpha)$, and $I$ is an extendible $\alpha$-invariant ideal of $A$.
Then there is a short exact sequence
\begin{equation}\label{diagram-ses-t}
\begin{diagram}\dgARROWLENGTH=0.5\dgARROWLENGTH
\node{0} \arrow{e} \node{I\times^{\piso}_{\alpha}\Gamma^{+}} \arrow{e,t}{\mu}\node{A\times^{\piso}_{\alpha}\Gamma^{+}}
\arrow{e,t}{\gamma}\node{A/I\times^{\piso}_{\tilde{\alpha}}\Gamma^{+}}\arrow{e}\node{0,}
\end{diagram}
\end{equation}
where $\mu$ is an isomorphism of $I\times^{\piso}_{\alpha}\Gamma^{+}$ onto the ideal
\[ \D:=\overline{\newspan} \{V_{x}^{*}i_{A}(i)V_{y}: i\in I, x,y \in \Gamma^{+}\} \text{ of } A\times^{\piso}_{\alpha}\Gamma^{+}. \]
If $q:A\rightarrow A/I$ is the quotient map, $i_{I},W$ denote the maps $I \rightarrow I\times^{\piso}_{\alpha}\Gamma^{+}$,
$W: \Gamma^{+}\rightarrow M(I\times^{\piso}_{\alpha}\Gamma^{+})$, and similarly for $i_{A/I}, U$ the maps
$A/I \rightarrow A/I\times^{\piso}_{\tilde{\alpha}}\Gamma^{+}$, $\Gamma^{+}\rightarrow M(A/I\times^{\piso}_{\tilde{\alpha}}\Gamma^{+})$,
then
\[ \mu\circ i_{I}=i_{A}|_{I}, \quad \overline{\mu}\circ W= V \quad \text{ and } \quad
\gamma\circ i_{A}=i_{A/I}\circ q, \quad \overline{\gamma}\circ V= U. \]
\end{theorem}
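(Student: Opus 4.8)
The plan is to construct both homomorphisms $\gamma$ and $\mu$ from the universal properties of the respective partial-isometric crossed products, to read off the four compatibility identities directly from these constructions, and then to establish exactness; I expect the injectivity of $\mu$ to be the main obstacle. First I would construct $\gamma$. Since $q\circ\alpha_x=\tilde{\alpha}_x\circ q$, composing the canonical covariant representation $(i_{A/I},U)$ of $(A/I,\Gamma^{+},\tilde{\alpha})$ with $q$ yields a pair $(i_{A/I}\circ q,\,U)$, which I would check is a covariant partial-isometric representation of $(A,\Gamma^{+},\alpha)$: nondegeneracy of $i_{A/I}\circ q$ holds because $q$ is surjective and $i_{A/I}$ is nondegenerate, while $i_{A/I}(q(\alpha_x(a)))=U_x\,i_{A/I}(q(a))\,U_x^{*}$ is immediate from the covariance of $(i_{A/I},U)$. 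The universal property of $A\times^{\piso}_{\alpha}\Gamma^{+}$ then produces $\gamma:=(i_{A/I}\circ q)\times U$ with $\gamma\circ i_A=i_{A/I}\circ q$ and $\overline{\gamma}\circ V=U$, and $\gamma$ is surjective since its range contains $i_{A/I}(A/I)=i_{A/I}(q(A))$ and all $U_x$, which generate $A/I\times^{\piso}_{\tilde{\alpha}}\Gamma^{+}$.

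Next I would build $\mu$. The restriction $i_A|_I$ together with $V$ forms a covariant partial-isometric representation of $(I,\Gamma^{+},\alpha)$: the covariance relations $i_A(\alpha_x(i))V_x=V_x i_A(i)$ are the $A$-relations specialized to $i\in I$, and $i_A|_I$ is nondegenerate into the multiplier algebra of the ideal it generates. The universal property of $I\times^{\piso}_{\alpha}\Gamma^{+}$ then gives $\mu:=(i_A|_I)\times V$ with $\mu\circ i_I=i_A|_I$ and $\overline{\mu}\circ W=V$, so that $\mu(W_x^{*}i_I(i)W_y)=V_x^{*}i_A(i)V_y$ and the range of $\mu$ is exactly $\D$. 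To see that $\D$ is an ideal it suffices, by the spanning descriptions of $A\times^{\piso}_{\alpha}\Gamma^{+}$ and of $\D$, to check that products $V_z^{*}i_A(a)V_w\cdot V_x^{*}i_A(i)V_y$ lie in $\D$; here I would exploit the total ordering of $\Gamma$ to rewrite the middle factor $V_wV_x^{*}$ (splitting on whether $w\le x$ or $x\le w$ and using $V_{s+t}=V_sV_t$ together with the commutation of the source and range projections) and the fact that $aI,\,Ia\subseteq I$, so that the surviving central factor remains in $i_A(I)$.

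For exactness, the inclusion $\D\subseteq\ker\gamma$ is immediate, since $\gamma(V_x^{*}i_A(i)V_y)=U_x^{*}i_{A/I}(q(i))U_y=0$ for $i\in I$. For the reverse inclusion I would show that the homomorphism $\overline{\gamma}$ induced on $(A\times^{\piso}_{\alpha}\Gamma^{+})/\D$ is an isomorphism by producing an explicit inverse. Writing $Q$ for the quotient map onto $(A\times^{\piso}_{\alpha}\Gamma^{+})/\D$, the pair $(Q\circ i_A,\,\overline{Q}\circ V)$ is a covariant representation of $(A,\Gamma^{+},\alpha)$ that annihilates $I$ — because $i_A(i)=V_0^{*}i_A(i)V_0\in\D$ for $i\in I$ — and hence factors through $q$ to give a covariant representation of $(A/I,\Gamma^{+},\tilde{\alpha})$. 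The universal property yields $\psi:A/I\times^{\piso}_{\tilde{\alpha}}\Gamma^{+}\to(A\times^{\piso}_{\alpha}\Gamma^{+})/\D$, and evaluating on generators shows $\psi$ and $\overline{\gamma}$ are mutually inverse, whence $\ker\gamma=\D$.

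The remaining and hardest step is the injectivity of $\mu$. Here I would begin with a faithful nondegenerate representation of $I\times^{\piso}_{\alpha}\Gamma^{+}$, realized as $\rho\times T$ for a covariant partial-isometric representation $(\rho,T)$ of $(I,\Gamma^{+},\alpha)$ on a Hilbert space $H$ with $\rho$ nondegenerate. Since $I$ is an ideal of $A$, the nondegenerate representation $\rho$ extends uniquely to a representation $\overline{\rho}:A\to B(H)$. The crux is to verify that $(\overline{\rho},T)$ is covariant for $(A,\Gamma^{+},\alpha)$, i.e. to promote $\overline{\rho}(\alpha_x(i))=T_x\overline{\rho}(i)T_x^{*}$ from $i\in I$ to all $a\in A$; I would do this by approximating through $a\,i_\lambda$ for an approximate identity $(i_\lambda)$ of $I$, using $\alpha_x(a\,i_\lambda)=\alpha_x(a)\alpha_x(i_\lambda)\in I$, strict continuity, and the relation $T_xT_x^{*}=\overline{\rho}(\overline{\alpha}_x(1))$. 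This is exactly where the extendibility of $\alpha$ and of $I$ enters, and care is needed to match the range projections of the $T_x$ computed inside $M(I)$ with those inside $M(A)$. Granting this, $\overline{\rho}\times T$ exists and satisfies $(\overline{\rho}\times T)\circ\mu=\rho\times T$ on the spanning elements, so it is a left inverse of $\mu$ onto a faithful image, forcing $\mu$ to be injective. The four displayed compatibility identities are then precisely the defining relations of $\gamma$ and $\mu$ recorded during their construction.
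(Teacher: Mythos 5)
Your proposal follows essentially the same route as the paper: the ideal computation for $\D$, the construction of $\mu=(i_{A}|_{I})\times V$, the extension of a covariant representation of $(I,\Gamma^{+},\alpha)$ to one of $(A,\Gamma^{+},\alpha)$ through $M(I)$ in order to control injectivity, and the factorization through $A/I$ of a representation killing $\D$ for exactness at the quotient are all the paper's steps, merely packaged as a direct injectivity argument rather than as a verification that $(\D,\,i_{A}|_{I},\,V)$ is itself a partial-isometric crossed product of $(I,\Gamma^{+},\alpha)$. The one point you assert rather than prove is the nondegeneracy of $i_{A}|_{I}$ as a representation of $I$ in $M(\D)$ --- this is not automatic, and it is exactly here (via an approximate identity $(e_{\lambda})$ of $I$ and the norm convergence $i_{A}(\alpha_{s}(e_{\lambda})i)\to V_{s}V_{s}^{*}i_{A}(i)$) that the extendibility of the ideal $I$ first enters, already in the construction of $\mu$ and not only in the verification of its injectivity.
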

\begin{proof}
We make some minor adjustment to the proof of \cite[Theorem 3.1]{Adji1} for partial isometries.
First, to check that $\D$ is indeed an ideal of $A\times^{\piso}_{\alpha}\Gamma^{+}$.
Let $\xi=V_{x}^{*}i_{A}(i)V_{y} \in\D$.
Then $V_{s}^{*}\xi$ is trivially contained in $\D$, and
computations below show that $i_{A}(a)\xi$ and $V_{s}\xi$ are all in $\D$ for $a\in A$ and $s\in \Gamma^{+}$:
\begin{align*}
i_{A}(a)\xi  & = i_{A}(a)V_{x}^{*}i_{A}(i)V_{y}=(V_{x}i_{A}(a^{*}))^{*}i_{A}(i)V_{y} \\
& =(i_{A}(\alpha_{x}(a^{*}))V_{x})^{*}i_{A}(i)V_{y}= V_{x}^{*}i_{A}(\alpha_{x}(a)i)V_{y};
\end{align*}
\begin{align*}
V_{s}\xi = &  V_{s}V_{x}^{*}i_{A}(i)V_{y}=V_{s}(V_{s}^{*}V_{s}V_{x}^{*}V_{x})V_{x}^{*}i_{A}(i)V_{y}
=  V_{s}V_{u}^{*}V_{u} V_{x}^{*} i_{A}(i) V_{y}, \quad u:=\max\{s,x\}\\
= & (V_{s}V_{s}^{*} V_{u-s}^{*})(V_{u-x}V_{x}V_{x}^{*})i_{A}(i)V_{y} = V_{u-s}^{*} (V_{u}V_{u}^{*}V_{u} V_{u}^{*})(V_{u-x}i_{A}(i))V_{y}\\
= & V_{u-s}^{*}V_{u}V_{u}^{*}i_{A}(\alpha_{u-x}(i))V_{u-x}V_{y}
= V_{u-s}^{*} \overline{i}_{A}(\overline{\alpha}_{u}(1))i_{A}(\alpha_{u-x}(i))V_{u-x+y}.
\end{align*}

This ideal $\D$ gives us a nondegenerate homomorphism $\psi:A\times^{\piso}_{\alpha}\Gamma^{+} \rightarrow M(D)$ which satisfies
$\psi(\xi)d=\xi d$ for $\xi \in A\times^{\piso}_{\alpha}\Gamma^{+}$ and $d\in \D$.
Let $j_{I}: I \stackrel{i_{A}}{\longrightarrow} A\times^{\piso}_{\alpha}\Gamma^{+} \stackrel{\psi}{\longrightarrow} M(\D)$, and
$S:\Gamma^{+} \stackrel{V}\longrightarrow M(A\times^{\piso}_{\alpha}\Gamma^{+}) \stackrel{\overline{\psi}}{\longrightarrow} M(\D)$.
We use extendibility of ideal $I$ to show $j_{I}$ is nondegenerate.
Take an approximate identity $(e_{\lambda})$ for $I$, and let $\varphi:A\rightarrow M(I)$ be the homomorphism satisfying $\varphi(a)i=ai$ for $a\in A$ and $i\in I$.
Then $i_{A}(\alpha_{s}(e_{\lambda})i)$ converges in norm to $i_{A}(\overline{\varphi}(\overline{\alpha}_{s}(1_{M(A)}))i)$.
However
\[ i_{A}(\overline{\varphi}(\overline{\alpha}_{s}(1_{M(A)}))i)=\overline{i}_{A}(\overline{\alpha}_{s}(1_{M(A)}))i_{A}(i)=
V_{s}V_{s}^{*}i_{A}(i).\]
So $i_{A}(\alpha_{s}(e_{\lambda})i)$ converges in norm to $V_{s}V_{s}^{*}i_{A}(i)$.
Since  $j_{I}(e_{\lambda})V_{s}^{*}i_{A}(i)V_{t}=V_{s}^{*}i_{A}(\alpha_{s}(e_{\lambda})i)V_{t}$ by covariance, it follows that
$j_{I}(e_{\lambda})V_{s}^{*}i_{A}(i)V_{t}$ converges in norm to $V_{s}^{*}i_{A}(i)V_{t}$. We can similarly show that $V_{s}^{*}i_{A}(i)V_{t}j_{I}(e_{\lambda})$ converges in norm to $V_{s}^{*}i_{A}(i)V_{t}$.
Thus $j_{I}(e_{\lambda})\rightarrow 1_{M(\D)}$ strictly, and hence $j_{I}$ is nondegenerate.

We claim that the triple $(\D,j_{I},S)$ is a partial-isometric crossed product of $(I,\Gamma^{+},\alpha)$.
A routine computations show the covariance of $(j_{I},S)$ for $(I,\Gamma^{+},\alpha)$.
Suppose now $(\pi,T)$ is a covariant representation of $(I,\Gamma^{+},\alpha)$ on a Hilbert space $H$.
Let $\rho: A\stackrel{\varphi}{\longrightarrow} M(I)\stackrel{\overline{\pi}}{\longrightarrow} B(H)$.
Then by extendibility of ideal $I$, that is $\overline{\alpha|_{I}}\circ \varphi=\varphi\circ \alpha$, the pair $(\rho,T)$ is a covariant representation of
$(A,\Gamma^{+},\alpha)$.
The restriction $(\rho\times T)|_{\D}$ to $\D$ of $\rho\times T$ is a nondegenerate representation of $\D$ which satisfies the requirement
$(\rho\times T)|_{\D}\circ j_{I}= \pi$ and $\overline{(\rho\times T)|_{\D}}\circ S= T$.
Thus the triple $(\D,j_{I},S)$ is a partial-isometric crossed product for $(I,\Gamma^{+},\alpha)$, and we have
the homomorphism $\mu=i_{A}|_{I}\times V$.

Next we show the exactness. Let $\Phi$ be a nondegenerate representation of $A\times_{\alpha}^{\piso}\Gamma^{+}$ with kernel $\D$.
Since $I\subset \ker \Phi\circ i_{A}$, we can have a representation $\tilde{\Phi}$ of $A/I$, which
together with $\overline{\Phi}\circ V$ is a covariant partial-isometric representation of $(A/I,\Gamma^{+},\tilde{\alpha})$.
Then $\tilde{\Phi}\times (\overline{\Phi}\circ V)$ lifts to $\Phi$, and therefore $\ker\gamma \subset \ker \Phi=\D$.
\end{proof}

\begin{cor}\label{square-piso-iso}
Let $(A,\Gamma^{+},\alpha)$ be a dynamical system, and $I$ an extendible $\alpha$-invariant ideal of $A$.
Then there is a commutative diagram
\begin{center}
\begin{tikzpicture}
 \matrix(m)[matrix of math nodes,row sep=2em,column sep=2em,minimum width=2em]
 {\empty & 0 & 0 & 0 & \empty\\
  0 & \ker\phi_{I} & I\times^{\piso}_{\alpha}\Gamma^{+} & I\times^{\iso}_{\alpha}\Gamma^{+}  & 0\\
  0 & \ker\phi_{A} & A\times^{\piso}_{\alpha}\Gamma^{+} &   A\times_{\alpha}^{\iso}\Gamma^{+}  & 0\\
  0 & \ker\phi_{A/I} & A/I\times^{\piso}_{\tilde{\alpha}}\Gamma^{+}  & A/I\times_{\tilde{\alpha}}^{\iso}\Gamma^{+} & 0\\
   \empty & 0 & 0 & 0 & \empty\\};
  \path[-stealth]
    (m-1-2) edge node [above] {} (m-2-2)
    (m-1-3) edge node [above] {} (m-2-3)
    (m-1-4) edge node [above] {} (m-2-4)
    (m-2-1) edge node [above] {} (m-2-2)
    (m-2-2) edge node [above] {} (m-2-3) edge node [left] {} (m-3-2)
    (m-2-3) edge node [above] {$\phi_{I}$} (m-2-4) edge node [left] {$\mu$} (m-3-3)
    (m-2-4) edge node [] {} (m-2-5) edge node [left] {$\mu^{\iso}$} (m-3-4)
    (m-3-1) edge node [] {} (m-3-2)
    (m-3-2) edge node [above] {} (m-3-3) edge node [left] {} (m-4-2)
    (m-3-3) edge node [above] {$\phi_{A}$} (m-3-4) edge node [left] {$\gamma$} (m-4-3)
    (m-3-4) edge node [] {} (m-3-5) edge node [left] {$\gamma^{\iso}$} (m-4-4)
    (m-4-1) edge node [] {} (m-4-2)
    (m-4-2) edge node [above] {} (m-4-3) edge node [left] {} (m-5-2)
    (m-4-3) edge node [above] {$\phi_{A/I}$} (m-4-4) edge node [left] {} (m-5-3)
    (m-4-4) edge node [] {} (m-4-5) edge node [left] {} (m-5-4);
\end{tikzpicture}
\end{center}
\end{cor}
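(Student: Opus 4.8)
The plan is to build the three horizontal maps from the universal property, check that the two inner squares commute, and then read off the left-hand column together with its exactness from a diagram chase.

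First I would produce the horizontal homomorphisms. Write $(k_{A},S)$, $(k_{I},S^{I})$ and $(k_{A/I},S^{A/I})$ for the canonical covariant isometric representations of $(A,\Gamma^{+},\alpha)$, $(I,\Gamma^{+},\alpha)$ and $(A/I,\Gamma^{+},\tilde{\alpha})$ inside their isometric crossed products. Every isometry is a partial isometry, so each of these is also a covariant partial-isometric representation, and the universal property of the partial-isometric crossed product yields nondegenerate homomorphisms
\[ \phi_{A}=k_{A}\times S, \quad \phi_{I}=k_{I}\times S^{I}, \quad \phi_{A/I}=k_{A/I}\times S^{A/I}, \]
determined by $\phi_{A}\circ i_{A}=k_{A}$, $\overline{\phi_{A}}\circ V=S$, and analogously for $\phi_{I}$ and $\phi_{A/I}$. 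The range of each such map contains the canonical generators of the target isometric crossed product and is closed, being the range of a $*$-homomorphism; hence each $\phi$ is surjective, and each horizontal row is exact by the very definition of its kernel. The middle column is the short exact sequence of Theorem \ref{ses-pisoG}, while the right-hand column is the isometric analogue of that theorem recalled in the introduction; in particular $\mu^{\iso}$ is injective and $\gamma^{\iso}$ is surjective.

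Next I would verify that the two inner squares commute. As every map involved is determined by its values on the canonical generators, it suffices to test the two composites on $i_{I}(i)$ and $W_{x}$ for the top square, and on $i_{A}(a)$ and $V_{x}$ for the bottom square. For the top square, combining $\mu\circ i_{I}=i_{A}|_{I}$ from Theorem \ref{ses-pisoG} with $\phi_{A}\circ i_{A}=k_{A}$ and the isometric relation $\mu^{\iso}\circ k_{I}=k_{A}|_{I}$ shows that both $\phi_{A}\circ\mu$ and $\mu^{\iso}\circ\phi_{I}$ carry $i_{I}(i)$ to $k_{A}(i)$; passing to multiplier extensions and using $\overline{\mu}\circ W=V$, $\overline{\phi_{A}}\circ V=S$ and $\overline{\mu^{\iso}}\circ S^{I}=S$ shows that both carry $W_{x}$ to $S_{x}$. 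The bottom square is identical, now using $\gamma\circ i_{A}=i_{A/I}\circ q$ and $\overline{\gamma}\circ V=U$ from Theorem \ref{ses-pisoG} together with the isometric relations $\gamma^{\iso}\circ k_{A}=k_{A/I}\circ q$ and $\overline{\gamma^{\iso}}\circ S=S^{A/I}$.

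Commutativity at once gives the left-hand column: if $\phi_{I}(x)=0$ then $\phi_{A}(\mu(x))=\mu^{\iso}(\phi_{I}(x))=0$, so $\mu$ restricts to a map $\ker\phi_{I}\to\ker\phi_{A}$, and similarly $\gamma$ restricts to $\ker\phi_{A}\to\ker\phi_{A/I}$. It remains to show this column is exact, which is a standard chase through the two exact columns and the three exact rows now available (valid here because exactness of $*$-homomorphisms is exactness of the underlying linear maps, with closed ranges and ideals as kernels). Injectivity at $\ker\phi_{I}$ is inherited from $\mu$; the identity $\gamma\circ\mu=0$ gives $\mu(\ker\phi_{I})\subseteq\ker(\gamma|_{\ker\phi_{A}})$, and the reverse inclusion uses the injectivity of $\mu^{\iso}$ to lift a preimage under $\mu$ back into $\ker\phi_{I}$. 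I expect the only step needing real care to be surjectivity of $\gamma|_{\ker\phi_{A}}$: given $z\in\ker\phi_{A/I}$ I would choose $y_{0}$ with $\gamma(y_{0})=z$, note from commutativity that $\phi_{A}(y_{0})\in\ker\gamma^{\iso}=\range\mu^{\iso}$, pull this element back through the surjection $\phi_{I}$ to some $x$, and then replace $y_{0}$ by $y:=y_{0}-\mu(x)$, which lies in $\ker\phi_{A}$ and still satisfies $\gamma(y)=z$ since $\gamma\circ\mu=0$. The genuine obstacle is therefore purely organizational, namely ensuring each chosen lift lands in the prescribed subalgebra, which is exactly what the exactness of the isometric column and the surjectivity of the horizontal maps provide.
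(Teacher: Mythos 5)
Your proposal is correct and follows essentially the same route as the paper: the rows and the right-hand column are the known exact sequences (the paper cites \cite{AZ} and \cite{Adji1} for these, which amounts to your universal-property construction of $\phi_{A}$, $\phi_{I}$, $\phi_{A/I}$), commutativity of the two inner squares is checked on generators, and the left column is then extracted by the standard nine-lemma chase, which the paper simply labels ``Snake Lemma'' and you write out explicitly. No gaps.
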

\begin{proof}
The three row exact sequences follow from \cite{AZ}, the middle column from Theorem \ref{ses-pisoG} and
the right column exact sequence from \cite{Adji1}.
By inspection on the spanning elements, one can see that  $\mu(\ker\phi_{I})$ is an ideal of $\ker\phi_{A}$ and $\mu^{\iso}\circ \phi_{I}=\phi_{A}\circ \mu$,
thus first and second rows commute.
Then Snake Lemma gives the commutativity of all rows and columns.
\end{proof}

\section{The Example}
We consider a dynamical system $(B_{\Gamma^{+}},\Gamma^{+},\tau)$  consisting of a unital $C^*$-subalgebra $B_{\Gamma^{+}}$ of $\ell^{\infty}(\Gamma^{+})$
spanned by the set $\{1_{s} : s\in\Gamma^{+}\}$ of characteristic functions $1_{s}$ of $\{x\in \Gamma^{+} : x\ge s \}$, the action
$\tau$ of $\Gamma^{+}$ on $B_{\Gamma^{+}}$  is given by  $\tau_{x}(1_{s})=1_{s+x}$.
The ideal $B_{\Gamma^{+},\infty}=\overline{\newspan}\{1_{i}-1_{j} : i<j \in\Gamma^{+}\}$ is an extendible $\tau$-invariant ideal of $B_{\Gamma^{+}}$.
Then we want to show in Proposition \ref{BGamma} that an application of Corollary \ref{square-piso-iso} to the system $(B_{\Gamma^{+}},\Gamma^{+},\tau)$
and the ideal $B_{\Gamma^{+},\infty}$ gives  \cite[Theorem 5.6]{LR}.

The crossed product $B_{\Gamma^{+}}\times_{\tau}^{\iso}\Gamma^{+}$ is a universal $C^*$-algebra generated by
the canonical isometric representation $t$ of $\Gamma^{+}$: every isometric representation $w$ of $\Gamma^{+}$ gives
a covariant isometric representation $(\pi_{w},w)$ of $(B_{\Gamma^{+}},\Gamma^{+},\tau)$.
Suppose $\{\varepsilon_{x} : x\in\Gamma^{+}\}$ is the usual orthonormal basis in $\ell^{2}(\Gamma^{+})$,
and let $T_{s}(\varepsilon_{x})=\varepsilon_{x+s}$ for every $s\in\Gamma^{+}$.
Then $s\mapsto T_{s}$ is an isometric representation of $\Gamma^{+}$, and the Toeplitz algebra $\T(\Gamma)$ is the $C^*$-subalgebra of
$B(\ell^{2}(\Gamma^{+}))$ generated by $\{T_{s} : s\in\Gamma^{+}\}$.
So there exists a representation $\mathfrak{T}:=\pi_{T}\times T$ of $B_{\Gamma^{+}}\times_{\tau}^{\iso}\Gamma^{+}$ on $\ell^{2}(\Gamma^{+})$
such that $\mathfrak{T}(t_{x})=T_{x}$ and $\mathfrak{T}(1_{x})=T_{x}T_{x}^{*}$ for all $x\in\Gamma^{+}$.
This representation is faithful by \cite[Theorem 2.4]{ALNR}.
Thus $B_{\Gamma^{+}}\times_{\tau}^{\iso}\Gamma^{+}$ and the Toeplitz algebra
$\T(\Gamma)=\pi_{T}\times T(B_{\Gamma^{+}}\times_{\tau}^{\iso}\Gamma^{+})$ are isomorphic, and the isomorphism
takes the ideal $B_{\Gamma^{+},\infty}\times_{\tau}^{\iso}\Gamma^{+}$ of $B_{\Gamma^{+}}\times_{\tau}^{\iso}\Gamma^{+}$
onto the commutator ideal ${\mathcal C}_{\Gamma}=\overline{\newspan}\{T_{x}(1-TT^{*})T_{y}^{*} : x,y \in\Gamma^{+}\}$ of $\T(\Gamma)$.

Similarly, the crossed product $B_{\Gamma^{+}}\times_{\tau}^{\piso}\Gamma^{+}$ has
a partial-isometric version of universal property by \cite[Proposition 5.1]{LR}:
every partial-isometric representation $v$ of $\Gamma^{+}$ gives a covariant partial-isometric representation
$(\pi_{v},v)$ of $(B_{\Gamma^{+}},\Gamma^{+},\tau)$ with $\pi_{v}(1_{x})=v_{x}v_{x}^{*}$,
and then $B_{\Gamma^{+}}\times_{\tau}^{\piso}\Gamma^{+}$ is the universal $C^*$-algebra generated by the canonical partial-isometric representation
$v$ of $\Gamma^{+}$.
Now since $x\mapsto T_{x}$ and $x\mapsto T^{*}_{x}$ are partial-isometric
representations of $\Gamma^{+}$ in the Toeplitz algebra $\T(\Gamma)$, there exist (by the universality) a homomorphism
$\varphi_{T}$ and $\varphi_{T^{*}}$ of $B_{\Gamma^{+}}\times_{\tau}^{\piso}\Gamma^{+}$ onto $\T(\Gamma)$.

Next consider the algebra $C(\hat{\Gamma})$ generated by $\{\lambda_{x} : x\in \Gamma\}$ of the evaluation maps $\lambda_{x}(\xi)=\xi(x)$ on $\hat{\Gamma}$.
Let $\psi_{T}$ and $\psi_{T^{*}}$ be the homomorphisms of $\T(\Gamma)$ onto $C(\hat{\Gamma})$ defined by
$\psi_{T}(T_{x})=\lambda_{x}$ and $\psi_{T^{*}}(T_{x})=\lambda_{-x}$.

\begin{prop}\cite[Theorem 5.6]{LR}\label{BGamma} Let $\Gamma^{+}$ be the positive cone in a totally ordered abelian group $\Gamma$.
Then the following commutative diagram exists:
\begin{center}
\begin{equation}\label{diagram0}
\begin{tikzpicture}
  \matrix(m)[matrix of math nodes,row sep=2em,column sep=2em,minimum width=2em]
  {\empty & 0 & 0 & 0 & \empty\\
  0 & \ker\varphi_{T}\cap\ker\varphi_{T^{*}} & \ker\varphi_{T^{*}} & {\mathcal C}_{\Gamma} & 0\\
   0 & \ker\varphi_{T} & B_{\Gamma^{+}}\times_{\tau}^{\piso}\Gamma^{+} & \mathcal{T}(\Gamma) & 0\\
   0 & {\mathcal C}_{\Gamma} &\mathcal{T}(\Gamma) & C(\hat{\Gamma}) & 0\\
   \empty & 0 & 0 & 0 & \empty\\};
  \path[-stealth]
    (m-1-2) edge node [above] {} (m-2-2)
    (m-1-3) edge node [above] {} (m-2-3)
    (m-1-4) edge node [above] {} (m-2-4)
    (m-2-1) edge node [above] {} (m-2-2)
    (m-2-2) edge node [above] {} (m-2-3) edge node [left] {} (m-3-2)
    (m-2-3) edge node [above] {$\varphi_{T}|$} (m-2-4) edge node [left] {} (m-3-3)
    (m-2-4) edge node [] {} (m-2-5) edge node [left] {} (m-3-4)
    (m-3-1) edge node [] {} (m-3-2)
    (m-3-2) edge node [above] {} (m-3-3)  edge node [left] {} (m-4-2)
    (m-3-3) edge node [above] {$\Psi$} (m-4-4) edge node [above] {$\varphi_{T}$} (m-3-4) edge node [left] {$\varphi_{T^*}$} (m-4-3)
    (m-3-4) edge node [] {} (m-3-5) edge node [right] {$\psi_{T^*}$} (m-4-4)
    (m-4-1) edge node [] {} (m-4-2)
    (m-4-2) edge node [above] {} (m-4-3) edge node [left] {} (m-5-2)
    (m-4-3) edge node [above] {$\psi_{T}$} (m-4-4) edge node [left] {} (m-5-3)
    (m-4-4) edge node [] {} (m-4-5) edge node [left] {} (m-5-4);
\end{tikzpicture}
\end{equation}
\end{center}
where $\Psi$ maps each generator $v_{x}\in B_{\Gamma^{+}}\times_{\tau}^{\piso}\Gamma^{+}$ to $\delta_{x}^{*}\in C^{*}(\Gamma)\simeq C(\hat{\Gamma})$.
\end{prop}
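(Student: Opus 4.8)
The plan is to apply Corollary \ref{square-piso-iso} to the distinguished system $(B_{\Gamma^{+}},\Gamma^{+},\tau)$ and the extendible $\tau$-invariant ideal $I=B_{\Gamma^{+},\infty}$, producing a $3\times 3$ commutative diagram with exact rows and columns, and then to identify each of its nine entries and all of its homomorphisms with the corresponding objects in \eqref{diagram0}. The right-hand (isometric) column is already settled by the discussion preceding the statement: $B_{\Gamma^{+}}\times_{\tau}^{\iso}\Gamma^{+}\cong\T(\Gamma)$ by \cite[Theorem 2.4]{ALNR}, and this isomorphism carries the ideal $B_{\Gamma^{+},\infty}\times_{\tau}^{\iso}\Gamma^{+}$ onto the commutator ideal ${\mathcal C}_{\Gamma}$. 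Thus the isometric column reads $0\to{\mathcal C}_{\Gamma}\to\T(\Gamma)\to A/I\times^{\iso}_{\tilde{\tau}}\Gamma^{+}\to 0$, from which I read off $A/I\times^{\iso}_{\tilde{\tau}}\Gamma^{+}\cong\T(\Gamma)/{\mathcal C}_{\Gamma}\cong C(\hat{\Gamma})$.

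The first genuinely new step is to understand the quotient system. I would first check that the character $\epsilon(1_{s})=1$ has kernel exactly $B_{\Gamma^{+},\infty}$ (it contains each $1_{i}-1_{j}$ and, like $B_{\Gamma^{+},\infty}$, has codimension one since $1_{s}=1_{0}-(1_{0}-1_{s})$), so that $B_{\Gamma^{+}}/B_{\Gamma^{+},\infty}\cong\C$ and the induced action $\tilde{\tau}$ is trivial. For the resulting system $(\C,\Gamma^{+},\id)$, covariance forces $S_{x}S_{x}^{*}=\overline{\pi}(\overline{\tilde{\tau}}_{x}(1))=1$, so a covariant partial-isometric representation is precisely a representation of $\Gamma^{+}$ by co-isometries; passing to adjoints identifies these with isometric representations, and hence the universal property of $\C\times^{\piso}_{\id}\Gamma^{+}$ coincides with that of the isometric crossed product. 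Using the faithfulness from \cite[Theorem 2.4]{ALNR} once more, this gives $A/I\times^{\piso}_{\tilde{\tau}}\Gamma^{+}=\C\times^{\piso}_{\id}\Gamma^{+}\cong\T(\Gamma)$, the canonical generator $v_{x}$ of the quotient crossed product being sent to $T_{x}^{*}$. Likewise $\C\times^{\iso}_{\id}\Gamma^{+}\cong C^{*}(\Gamma)\cong C(\hat{\Gamma})$, agreeing with the previous paragraph.

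With the nine entries named, I would match the homomorphisms by tracking the images of the generators $v_{x}$. The horizontal map of the middle row is the natural homomorphism $\phi_{A}$, which composes with $\mathfrak{T}$ to send $v_{x}\mapsto T_{x}$, so $\phi_{A}=\varphi_{T}$ and $\ker\phi_{A}=\ker\varphi_{T}$. The vertical quotient map $\gamma$ of the middle column sends $v_{x}$ to the generator of $A/I\times^{\piso}_{\tilde{\tau}}\Gamma^{+}\cong\T(\Gamma)$, i.e. to $T_{x}^{*}$, so $\gamma=\varphi_{T^{*}}$ and, by Theorem \ref{ses-pisoG}, $\ker\gamma=\ker\varphi_{T^{*}}$ is the image of $B_{\Gamma^{+},\infty}\times^{\piso}_{\tau}\Gamma^{+}$. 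The top-left corner is then $\ker\phi_{I}=\ker(\varphi_{T}|_{\ker\varphi_{T^{*}}})=\ker\varphi_{T}\cap\ker\varphi_{T^{*}}$, while the bottom-left corner is $\ker\phi_{A/I}$, the kernel of the symbol map $\T(\Gamma)\to C(\hat{\Gamma})$, namely ${\mathcal C}_{\Gamma}$. The remaining horizontal and vertical quotient maps become the two symbol maps $\psi_{T}$ and $\psi_{T^{*}}$ after a fixed choice of Fourier isomorphism $C^{*}(\Gamma)\cong C(\hat{\Gamma})$, and the diagonal composite is the map $\Psi\colon v_{x}\mapsto\delta_{x}^{*}$, whose well-definedness is exactly the commutativity of the lower-right square inherited from the Corollary.

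The main obstacle I anticipate is the identification $A/I\times^{\piso}_{\tilde{\tau}}\Gamma^{+}\cong\T(\Gamma)$ together with the bookkeeping that makes the quotient map $\gamma$ equal $\varphi_{T^{*}}$ rather than $\varphi_{T}$. Conceptually this is the crux of the whole proposition: the same algebra $\T(\Gamma)$ occurs both as the isometric crossed product $A\times^{\iso}_{\tau}\Gamma^{+}$ (generated by the isometries $T_{x}$) and as the partial-isometric crossed product of the trivial quotient system (generated by the co-isometries $T_{x}^{*}$), and it is this passage from isometries to co-isometries that produces the two distinct copies of $\T(\Gamma)$ and the two distinct symbol maps $\psi_{T},\psi_{T^{*}}$ in \eqref{diagram0}. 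Once these identifications and the matching of generators are in place, the exactness of every row and column and the commutativity of every square are inherited verbatim from Corollary \ref{square-piso-iso}.
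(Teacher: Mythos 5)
Your proposal is correct and follows essentially the same route as the paper: apply Corollary \ref{square-piso-iso} to $(B_{\Gamma^{+}},\Gamma^{+},\tau)$ with $I=B_{\Gamma^{+},\infty}$, identify the quotient system with $(\C,\Gamma^{+},\id)$ and realize $\C\times^{\piso}_{\id}\Gamma^{+}\cong\T(\Gamma)$ via $v_{x}\mapsto T_{x}^{*}$ (the paper exhibits $(\T(\Gamma),\iota_{\C},\iota_{\Gamma^{+}})$ directly as a partial-isometric crossed product, which is the same observation as your co-isometry/adjoint argument), and then match all entries and maps on generators. The only divergence is $\Psi$: you take it to be the diagonal composite $\psi_{T^{*}}\circ\varphi_{T}=\psi_{T}\circ\varphi_{T^{*}}$, which suffices for the statement as written, whereas the paper constructs $\Psi$ via the representation $\pi_{\delta^{*}}\times\delta^{*}$ and additionally proves $\ker\Psi=\ker\varphi_{T}+\ker\varphi_{T^{*}}$, extra information not required by the proposition.
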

\begin{proof}
Apply Corollary \ref{square-piso-iso} to the system $(B_{\Gamma^{+}},\Gamma^{+},\tau)$
and the extendible ideal $B_{\Gamma^{+},\infty}$.
Let $Q^{\piso}:=B_{\Gamma^{+}}/B_{\Gamma^{+},\infty}\times^{\piso}_{\tilde{\tau}}\Gamma^{+}$ and
$Q^{\iso}:=B_{\Gamma^{+}}/B_{\Gamma^{+},\infty}\times^{\iso}_{\tilde{\tau}}\Gamma^{+}$.
Then we have:
\begin{center}
\begin{equation}
\begin{tikzpicture} \label{diagram1}
  \matrix(m)[matrix of math nodes,row sep=2em,column sep=2em,minimum width=2em]
  {\empty & 0 & 0 & 0 & \empty\\
  0 & \ker\phi_{B_{\Gamma^{+},\infty}} & B_{\Gamma^{+},\infty}\times_{\tau}^{\piso}\Gamma^{+} & B_{\Gamma^{+},\infty}\times_{\tau}^{\iso}\Gamma^{+} & 0\\
   0 & \ker\phi_{B_{\Gamma^{+}}} & B_{\Gamma^{+}}\times_{\tau}^{\piso}\Gamma^{+} & B_{\Gamma^{+}}\times_{\tau}^{\iso}\Gamma^{+} & 0\\
   0 & \ker\phi_{Q} & Q^{\piso} &  Q^{\iso} & 0\\
   \empty & 0 & 0 & 0 & \empty\\};
  \path[-stealth]
    (m-1-2) edge node [above] {} (m-2-2)
    (m-1-3) edge node [above] {} (m-2-3)
    (m-1-4) edge node [above] {} (m-2-4)
    (m-2-1) edge node [above] {} (m-2-2)
    (m-2-2) edge node [above] {} (m-2-3) edge node [left] {} (m-3-2)
    (m-2-3) edge node [above] {$\phi_{B_{\Gamma^{+},\infty}}$} (m-2-4) edge node [left] {$\mu$} (m-3-3)
    (m-2-4) edge node [] {} (m-2-5) edge node [right] {$\mu_{B_{\Gamma^{+},\infty}}$} (m-3-4)
    (m-3-1) edge node [] {} (m-3-2)
    (m-3-2) edge node [above] {} (m-3-3)  edge node [left] {} (m-4-2)
    (m-3-3) edge node [above]
    {$\phi_{B_{\Gamma^{+}}}$} (m-3-4) edge node [left] {$\gamma$} (m-4-3)
    (m-3-4) edge node [] {} (m-3-5) edge node [right] {$\gamma_{B_{\Gamma^{+},\infty}}$} (m-4-4)
    (m-4-1) edge node [] {} (m-4-2)
    (m-4-2) edge node [above] {} (m-4-3) edge node [left] {} (m-5-2)
    (m-4-3) edge node [above] {$\phi_{Q}$} (m-4-4) edge node [left] {} (m-5-3)
    (m-4-4) edge node [] {} (m-4-5) edge node [left] {} (m-5-4);
\end{tikzpicture}
\end{equation}
\end{center}
We claim that exact sequences in this diagram and (\ref{diagram0}) are equivalent.
The middle exact sequences of (\ref{diagram0}) and (\ref{diagram1}) are trivially equivalent via the isomorphism
$\mathfrak{T}:B_{\Gamma^{+}}\times_{\tau}^{\iso}\Gamma^{+}\rightarrow \T(\Gamma)$.
By viewing $B_{\Gamma^{+}}$ as the algebra of functions that have limit, the map $f \in B_{\Gamma^{+}} \mapsto \lim_{x\in\Gamma^{+}} f(x)$
induces an isomorphism $B_{\Gamma^{+}}/B_{\Gamma^{+},\infty} \rightarrow \C$, which intertwines the action $\tilde{\tau}$ and the trivial action $\id$ on $\C$.
So $(B_{\Gamma^{+}}/B_{\Gamma^{+},\infty},\Gamma^{+},\tilde{\tau})\simeq (\C,\Gamma^{+},\id)$.
Moreover, $\mathfrak{T}$ combines with the isomorphism
$h: B_{\Gamma^{+}}/B_{\Gamma^{+},\infty}\times_{\tilde{\tau}}^{\iso}\Gamma^{+} \rightarrow \C\times_{\id}^{\iso}\Gamma^{+}
\rightarrow C^{*}(\Gamma)\simeq C(\hat{\Gamma})$
to identify the right-hand exact sequence equivalently to
$0\rightarrow {\mathcal C}_{\Gamma} \rightarrow \T(\Gamma) \stackrel{\psi_{T^{*}}}{\rightarrow} C(\hat{\Gamma}) \rightarrow 0$.

For the bottom sequence, we consider the pair of
\[ \iota_{\C}: z\in \C \mapsto z 1_{\T(\Gamma)}  \text{ and } \iota_{\Gamma^{+}}: x\in\Gamma^{+}\mapsto T_{x}^{*} \in\T(\Gamma). \]
It is a partial-isometric covariant representation, such that  $(\T(\Gamma),\iota_{\C},\iota_{\Gamma^{+}})$ is a partial-isometric crossed product of
$(\C,\Gamma^{+},\id)$.
So we have an isomorphism
\[ \Upsilon: Q^{\piso}\rightarrow  \C\times_{\id}^{\piso}\Gamma^{+}\stackrel{\iota}{\rightarrow}\T(\Gamma) \text{ in which }
\Upsilon(i_{\Gamma^{+}}(x))=T_{x}^{*} \text{ for all } x,\]
and moreover if $(j_{Q},u)$ denotes the canonical covariant partial-isometric representation of the system
$(Q:=B_{\Gamma^{+}}/B_{\Gamma^{+},\infty},\Gamma^{+},\tilde{\tau})$ in $Q^{\piso}$, then $\Upsilon$ satisfies the equations
$\Upsilon(u_{x})=T_{x}^{*}$ and $\Upsilon(j_{Q}(1_{x}+B_{\Gamma^{+},\infty}))= \iota_{\C}(\lim _{y} 1_{x}(y))=1$ for all $x\in\Gamma^{+}$.
To see $\Upsilon(\ker\phi_{Q})={\mathcal C}_{\Gamma}$, recall from \cite[Proposition 2.3]{AZ} that
\[ \ker\phi_{Q}:=\overline{\newspan}\{u_{x}^{*}j_{Q}(a)(1-u_{z}^{*}u_{z})u_{y} : a\in Q, x,y,z\in\Gamma^{+}\}. \]
Since $\Upsilon(u_{x}^{*}j_{Q}(a)(1-u_{z}^{*}u_{z})u_{y})$ is a scalar multiplication of $T_{x}(1-T_{z}T_{z}^{*})T_{y}^{*}$,
therefore $\Upsilon(\ker\phi_{Q})={\mathcal C}_{\Gamma}$.
Consequently the two exact sequences are equivalent:
\begin{equation*}
\begin{diagram}\dgARROWLENGTH=0.3\dgARROWLENGTH
\node{0} \arrow{e}\node{\ker\phi_{Q}} \arrow{s,l}{\Upsilon}\arrow{e}
\arrow{s}\arrow{e}\node{Q^{\piso}}\arrow{s,l}{\Upsilon}\arrow{e,t}{\phi_{Q}}
\node{Q^{\iso}}\arrow{s,l}{h}\arrow{e} \node{0}\\
\node{0} \arrow{e} \node{{\mathcal C}_{\Gamma}} \arrow{e} \node {\T(\Gamma)} \arrow{e,t}{\psi_{T}} \node {C(\hat{\Gamma})}
\arrow{e} \node{0.}
\end{diagram}
\end{equation*}

For the second column exact sequence, we note that the isomorphism $\jmath:Q^{\piso}\simeq\C\times^{\piso}_{\id}\Gamma^{+} \rightarrow \T(\Gamma)$
satisfies $\jmath\circ \gamma=\varphi_{T^{*}}$.
This implies
\[ B_{\Gamma^{+},\infty}\times_{\tau}^{\piso}\Gamma^{+}\simeq\ker(\jmath\circ\gamma) =\ker \varphi_{T^{*}}, \]
and therefore the second column sequence of diagram (\ref{diagram0}) is equivalent to
$0\rightarrow \ker\varphi_{T^{*}}\rightarrow B_{\Gamma^{+}}\times_{\tau}^{\piso}\Gamma^{+}\rightarrow \T(\Gamma) \rightarrow 0$.

Next we are working for the first row.
The homomorphism $\phi_{B_{\Gamma^{+}}}$ in the following diagram
\begin{equation*}
\begin{diagram}\dgARROWLENGTH=0.7\dgARROWLENGTH
\node{B_{\Gamma^{+}}\times_{\tau}^{\piso}\Gamma^{+}} \arrow{se,l}{\varphi_{T}}\arrow{e,t}{\phi_{B_{\Gamma^{+}}}}
\node{B_{\Gamma^{+}}\times_{\tau}^{\iso}\Gamma^{+}} \arrow{s,l}{\mathfrak{T}}\\
\node{} \node{\T(\Gamma),}
\end{diagram}
\end{equation*}
restricts to the homomorphism $\phi_{B_{\Gamma^{+},\infty}}$ of the ideal $B_{\Gamma^{+},\infty}\times_{\tau}^{\piso}\Gamma^{+}\simeq \ker\varphi_{T^{*}}$ onto
$B_{\Gamma^{+},\infty}\times_{\tau}^{\iso}\Gamma^{+}\simeq {\mathcal C}_{\Gamma}$.
So the homomorphism $\varphi_{T}|:\ker\varphi_{T^{*}}\rightarrow {\mathcal C}_{\Gamma}$
has kernel $I:=\ker\varphi_{T^{*}}\cap\ker\varphi_{T}$, and therefore first row exact sequence of the two diagrams are indeed equivalent.

Finally we show that such $\Psi$ exists.
Consider $C(\hat\Gamma)\simeq C^{*}(\Gamma)\simeq \C\times_{\id}\Gamma$ is the $C^*$-algebra generated by the unitary
representation $x\in\Gamma \mapsto \delta_{x} \in \C\times_{\id}\Gamma$.
Then we have a homomorphism $\pi_{\delta^{*}}\times\delta^{*}: B_{\Gamma^{+}}\times_{\tau}^{\piso}\Gamma^{+} \rightarrow \C\times_{\id}\Gamma$
which satisfies $\pi_{\delta^{*}}\times\delta^{*}(v_{x})=\delta_{x}^{*}$ for all $x\in\Gamma^{+}$, and hence it is surjective.
By looking at the spanning elements of $\ker\varphi_{T}$ and $\ker\varphi_{T^{*}}$ we can see that these two ideals are contained in
$\ker (\pi_{\delta^{*}}\times\delta^{*})$, therefore $\J:=\ker\varphi_{T}+\ker\varphi_{T^{*}}$ must be also in $\ker (\pi_{\delta^{*}}\times\delta^{*})$.
For the other inclusion, let $\rho$ be a unital representation of $B_{\Gamma^{+}}\times_{\tau}^{\piso}\Gamma^{+}$ on a Hilbert space $H_{\rho}$ with
$\ker \rho=\J$.
Then for $s\in\Gamma^{+}$ we have
$\rho ((1-v_{s}v_{s}^{*})-(1-v_{s}^{*}v_{s}))=0$ because $1-v_{s}v_{s}^{*}\in\ker\varphi_{T^{*}}$ and $1-v_{s}^{*}v_{s}\in \ker\varphi_{T}$ belong to $\J$.
So $0=\rho(v_{s}^{*}v_{s}-v_{s}v_{s}^{*})$, which implies that $\rho(v_{s}^{*}v_{s})=\rho(v_{s}v_{s}^{*})$.
On the other hand the equation
$\rho ((1-v_{s}v_{s}^{*})+(1-v_{s}^{*}v_{s}))=0$ gives $\rho(v_{s}v_{s}^{*})=I$.
Therefore $\rho(v_{s}v_{s}^{*})=\rho(v_{s}^{*}v_{s})=I$, and this means $\rho(v_{s})$ is unitary for every $s\in\Gamma^{+}$.
Consequently a representation $\tilde{\rho}:\C\times_{\id}\Gamma \rightarrow B(H_{\rho})$ exists, and it satisfies
$\tilde{\rho}\circ (\pi_{\delta^{*}}\times \delta^{*})=\rho$.
Thus $\ker \pi_{\delta^{*}} \times \delta^{*}\subset \ker \rho=\J$, and the composition $\pi_{\delta^{*}} \times \delta^{*}$ with
the Fourier transform $C^{*}(\Gamma)\simeq C(\hat{\Gamma})$ is the wanted homomorphism $\Psi$.
\end{proof}

\section{The Primitive Ideals of $\c\times_{\tau}^{\piso}\N$}

Suppose $\Gamma^{+}$ is now the additive semigroup $\N$.
The algebra $B_{\N}$ is conveniently viewed as the $C^*$-algebra $\c$ of convergent sequences, the ideal $B_{\N,\infty}$ with $\c_{0}$, and
the action $\tau$ of $\N$ on $\c$ is generated by the unilateral shift:
$\tau_{1}(x_{0},x_{1},x_{2},\cdots)=(0,x_{0},x_{1},x_{2},\cdots)$.
The universal $C^*$-algebra $\c\times_{\tau}^{\piso}\N$ is generated by a power partial isometry $v:=i_{\N}(1)$.
The Toeplitz algebra $\T(\Z)$ is the $C^*$-subalgebra of $B(\ell^{2}(\N))$ generated by isometries $\{T_{n}: n\in\N\}$,
where $T_{n}(e_{i})=e_{n+i}$ on the set of usual orthonormal basis $\{e_{i} :  i\in \N\cup\{0\} \}$ of $\ell^{2}(\N)$, and
the commutator ideal of $\T(\Z)$ is $\K(\ell^{2}(\N))$.
Kernels of $\varphi_{T}$ and $\varphi_{T^{*}}$ are identified in \cite[Lemma 6.2]{LR} by
\[ \ker \varphi_{T}=\overline{\newspan}\{g_{i,j}^{m}: i,j,m\in\N\}; \quad \ker \varphi_{T^{*}}=\overline{\newspan}\{f_{i,j}^{m}: i,j,m\in\N\} \]
where
\[ g_{i,j}^{m}=v_{i}^{*} v_{m}v_{m}^{*}(1-v^{*}v)v_{j} \quad \text{and} \quad f_{i,j}^{m}=v_{i} v_{m}^{*}v_{m}(1-vv^{*})v_{j}^{*}. \]
Moreover  $\I:=\ker \varphi_{T}\cap \ker \varphi_{T^{*}}$ is an essential ideal in $\c\times_{\tau}^{\piso}\N$ \cite[Lemma 6.8]{LR}, given by
\[ \overline{\newspan}\{f_{i,j}^{m}-f_{i,j}^{m+1}=g_{m-i,m-j}^{m}-g_{m-i,m-j}^{m+1}: m\in\N, 0\le i,j \le m\}. \]
The main point of \cite[\S 6]{LR} is to show that there exist isomorphisms of $\ker \varphi_{T}$ and $\ker \varphi_{T^{*}}$ onto the algebra
\[ \A:=\{f: \N\rightarrow K(\ell^{2}(\N)) : f(n) \in P_{n}K(\ell^{2}(\N))P_{n} \text{ and } \varepsilon_{\infty}(f)=\lim_{n}f(n) \text{ exists} \}, \]
where $P_{n}:=1-T_{n+1}T_{n+1}^{*}$ is the projection of $\ell^{2}(\N)$ onto the subspace spanned by $\{e_{i}: i=0,1,2,\cdots,n\}$, and such that
they restrict to isomorphisms of $\I$ onto the ideal
\[ \A_{0}:=\{f\in\A : \lim_{n} f(n) =0\} \text{ of } \A. \]

We shall show in Proposition \ref{A-A0} that $\A$ and $\A_{0}$ are related to the algebras of compact operators on the Hilbert $\c$-module $\ell^{2}(\N,\c)$  and on the closed sub-$\c$-module $\ell^{2}(\N,\c_{0})$.
We supply our readers with some basic theory of the $C^*$-algebra of operators on this Hilbert module, and let us begin with recalling the module structure of $\ell^{2}(\N,\c)$ (and its closed sub-module).
The vector space $\ell^{2}(\N,\c)$, containing all $\c$-valued functions $\textsf{a}:\N\rightarrow\c$ such that the series $\sum_{n\in\N}\textsf{a}(n)^{*}\textsf{a}(n)$ converges in the norm of $\c$, forms a Hilbert $\c$-module with the module structure defined by
$(\textsf{a}\cdot x)(n)=\textsf{a}(n)x$ for $x\in\c$,
and its $\c$-valued inner product given by $\langle \textsf{a},\textsf{b}\rangle=\sum_{n\in\N}\textsf{a}(n)^{*}\textsf{b}(n)$.
In fact the module $\ell^{2}(\N,\c)$ is naturally isomorphic to the Hilbert module $\ell^{2}(\N)\otimes\c$ that arises from the completion of algebraic (vector space) tensor product $\ell^{2}(\N)\odot\c$ associated to the $\c$-valued inner product defined on simple tensor product by
$\langle \xi\otimes x, \eta\otimes y\rangle=\langle \xi,\eta\rangle x^{*} y$ for $\xi,\eta \in \ell^{2}(\N)$ and $x, y \in\c$.
The isomorphism is implemented by the map $\phi$ that takes $(e_{i}\otimes x) \in \ell^{2}(\N)\otimes \c$ to the element $\phi(e_{i}\otimes x)\in \ell^{2}(\N,\c)$
which is the function $[\phi(e_{i}\otimes x)](n)=\left\{ \begin{array}{ll} x & \mbox{ if } i=n \\ 0 & \mbox{ otherwise. } \end{array} \right.$
By exactly the same arguments, we see that the two Hilbert $\c_{0}$-modules $\ell^{2}(\N,\c_{0})$ and $\ell^{2}(\N)\otimes\c_{0}$ are isomorphic.
However since $\c_{0}$ is an ideal of $\c$, it follows that the $\c_{0}$-module $\ell^{2}(\N,\c_{0})$ is a closed sub-$\c$-module of $\ell^{2}(\N,\c)$, and respectively  $\ell^{2}(\N)\otimes\c_{0}$ is a closed sub-$\c$-module of $\ell^{2}(\N)\otimes\c$.
Moreover the $\c$-module isomorphism $\phi$ restricts to $\c_{0}$-module isomorphism $\ell^{2}(\N,\c_{0})\simeq \ell^{2}(\N)\otimes\c_{0}$.

Next, we consider the $C^*$-algebra $\L(\ell^{2}(\N,\c))$ of adjointable operators on $\ell^{2}(\N,\c)$, and the ideal $\K(\ell^{2}(\N,\c))$ of $\L(\ell^{2}(\N,\c))$ spanned by the set $\{\theta_{\textsf{a},\textsf{b}}: \textsf{a},\textsf{b} \in \ell^{2}(\N,\c)\}$ of compact operators on the module
$\ell^{2}(\N,\c)$.
The algebra $\K(\ell^{2}(\N,\c_{0}))$ is defined by the same arguments, and note that $\K(\ell^{2}(\N,\c_{0}))$ is an ideal of $\K(\ell^{2}(\N,\c))$.
The isomorphism of two modules $\ell^{2}(\N,\c)$ and $\ell^{2}(\N)\otimes\c$, implies that $\K(\ell^{2}(\N,\c))\simeq  \K(\ell^{2}(\N)\otimes\c)$, which by the Hilbert module theorem, this is the $C^*$-algebraic tensor product $\K(\ell^{2}(\N))\otimes\c$ of $\K(\ell^{2}(\N))$ and $\c$.
We shall often use the characteristics functions $\{1_{n} : n\in\N\}$ as generator elements of $\c$ and
the spanning set $\{\theta_{e_{i}\otimes 1_{n},e_{j}\otimes 1_{n}} : i,j,n \in \N\}$ of $\K(\ell^{2}(\N,\c))$ in our computations.

There is another ingredient that we need to consider to state the Proposition.
Suppose $S\in \L(\ell^{2}(\N,\c))$ is an operator defined by $S(\textsf{a})(i)=\textsf{a}(i-1)$ for $i\ge 1$ and zero otherwise.
One can see that $S^{*}S=1$, i.e. $S$ is an isometry.
Let $p\in \L(\ell^{2}(\N,\c))$ be the projection $(p(\textsf{a}))(n)=1_{n}\textsf{a}(n)$ for $\textsf{a} \in \ell^{2}(\N,\c)$, and similarly
$q\in \L(\ell^{2}(\N,\c_{0}))$ be the projection $(q (\textsf{a}))(n)=1_{n}\textsf{a}(n)$ for $\textsf{a}\in \ell^{2}(\N,\c_{0})$.
Then the following two partial isometric representations of $\N$ in $p\L(\ell^{2}(\N,\c))p$ defined by
\[ w: n\in\N \mapsto pS_{n}^{*}p \quad \text{ and } \quad t: n\in\N \mapsto pS_{n}p, \]
induce the representations $\pi_{w}\times w$ and $\pi_{t}\times t$ of
$\c\times_{\tau}^{\piso}\N$ in $p\L(\ell^{2}(\N,\c))p$ which satisfy
$\pi_{w}\times w(v_{i})=pS_{i}^{*}p$ and $\pi_{t}\times t(v_{i})=pS_{i}p$ for all $i\in\N$.
These $\pi_{w}\times w$ and $\pi_{t}\times t$ are faithful representations \cite[Example 4.3]{AZ}.

\begin{prop}\label{A-A0}
The representations $\pi_{w}\times w$ and $\pi_{t}\times t$ map $\ker\varphi_{T}$ and $\ker\varphi_{T^{*}}$ isomorphically onto
the full corner $p\K(\ell^{2}(\N,\c))p$.
Moreover, they restrict to isomorphisms of the ideal $\ker\varphi_{T}\cap\ker\varphi_{T^{*}}$ onto the full corner $q\K(\ell^{2}(\N,\c_{0}))q$.
\end{prop}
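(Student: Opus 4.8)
The plan is to exploit the faithfulness of $\pi_w\times w$ and $\pi_t\times t$ (from \cite[Example 4.3]{AZ}) to reduce the statement to identifying the images of the two ideals. Since these representations are faithful, their restrictions to $\ker\varphi_T$, $\ker\varphi_{T^*}$ and $\I$ are automatically isometric $*$-homomorphisms with closed range; so it suffices to compute the image of a spanning family of each ideal and to check that the closed span of the images is exactly the stated corner. The natural pairing — and, as the computation below shows, the only workable one, since e.g. $\pi_t\times t(1-v^*v)$ already fails to be compact — is that $\pi_t\times t$ (sending $v_n$ to $pS_np$) is used on $\ker\varphi_{T^*}=\clsp\{f_{i,j}^m\}$, and $\pi_w\times w$ (sending $v_n$ to $pS_n^*p$) on $\ker\varphi_T=\clsp\{g_{i,j}^m\}$; the two computations are interchanged by the symmetry $S\leftrightarrow S^*$, $f\leftrightarrow g$, so I would carry out only the first in detail and obtain the second by this symmetry. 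I would then treat $\I=\ker\varphi_T\cap\ker\varphi_{T^*}$ using either representation, since $\I$ lies in both kernels.

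For the key computation I would first record the action on the simple tensors $e_i\otimes x$ of $\ell^2(\N,\c)\simeq\ell^2(\N)\otimes\c$, namely
\[ pS_np(e_i\otimes x)=e_{i+n}\otimes 1_{i+n}x,\qquad pS_n^*p(e_i\otimes x)=e_{i-n}\otimes 1_i x\ (i\ge n), \]
together with the products $1_i1_j=1_{\max(i,j)}$ in $\c$. From these one gets $\pi_t\times t(1-vv^*)=\theta_{e_0\otimes 1_0,\,e_0\otimes 1_0}$ and $\pi_t\times t(v_m^*v_m)(e_a\otimes x)=e_a\otimes 1_{a+m}x$, and hence
\[ \pi_t\times t(f_{i,j}^m)=\theta_{e_i\otimes 1_{\max(i,m)},\,e_j\otimes 1_j}, \]
a rank-one compact operator which satisfies $p\,\theta\,p=\theta$ and therefore lies in $p\K(\ell^2(\N,\c))p$. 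This generator computation — keeping track of the factors $v_m^*v_m$ and $1-vv^*$ and of the products $1_i1_j=1_{\max(i,j)}$ — is the main obstacle, as it is precisely here that the partial-isometric relations and the module structure interact; the corresponding images of the $g_{i,j}^m$ under $\pi_w\times w$ follow by the symmetry noted above.

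To identify the closed span I would pass to the standard picture $\K(\ell^2(\N,\c))\simeq\K(\ell^2(\N))\otimes\c\simeq C(\N\cup\{\infty\},\K(\ell^2(\N)))$, under which $p$ becomes the function $k\mapsto P_k$ with value $1$ at $\infty$, so that $p\K(\ell^2(\N,\c))p$ is exactly the algebra $\A$. In this picture $\pi_t\times t(f_{i,j}^m)$ is the function $k\mapsto 1_{\max(i,j,m)}(k)\,|e_i\rangle\langle e_j|$; fixing $(i,j)$ and varying $m$, the closed span of these is $\{g\,|e_i\rangle\langle e_j|:g\in\c,\ g(k)=0\text{ for }k<\max(i,j)\}$, which is precisely the $(i,j)$-entry permitted by the condition $g(k)\in P_k\K(\ell^2(\N))P_k$ defining $\A$. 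Running over all $(i,j)$ yields $\clsp\,\pi_t\times t(\ker\varphi_{T^*})=\A=p\K(\ell^2(\N,\c))p$. Fullness of the corner is immediate: since $1_0$ is the unit of $\c$ we have $e_0\otimes x\in p\,\ell^2(\N,\c)$ for every $x\in\c$, and $\langle e_0\otimes x,e_0\otimes y\rangle=\overline{x}y$ ranges over $\c$, whence $\overline{\langle\ell^2(\N,\c),\,p\,\ell^2(\N,\c)\rangle}=\c$ and $p$ is full.

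Finally, for $\I$ I would use its spanning set $\{f_{i,j}^m-f_{i,j}^{m+1}:0\le i,j\le m\}$. For $i\le m$ the formula above gives
\[ \pi_t\times t(f_{i,j}^m-f_{i,j}^{m+1})=\theta_{e_i\otimes(1_m-1_{m+1}),\,e_j\otimes 1_j}, \]
and because $1_m-1_{m+1}$ is the point-mass projection at $m$ with $1_j(1_m-1_{m+1})=1_m-1_{m+1}$, this equals $\theta_{e_i\otimes(1_m-1_{m+1}),\,e_j\otimes(1_m-1_{m+1})}$, whose two vectors lie in $\ell^2(\N,\c_0)$; thus it sits in the corner $q\K(\ell^2(\N,\c_0))q$. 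In the function picture it is the point mass $k\mapsto\delta_{k,m}\,|e_i\rangle\langle e_j|$, which vanishes at $\infty$, and exactly as before the closed span of these over all admissible $(i,j,m)$ is $\A_0=q\K(\ell^2(\N,\c_0))q$; the fullness of $q$ follows as for $p$, using an approximate identity of $\c_0$ in place of the unit $1_0$.
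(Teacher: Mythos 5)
Your proposal is correct and follows essentially the same route as the paper: it reduces everything to the faithfulness of $\pi_{w}\times w$ and $\pi_{t}\times t$ from \cite[Example 4.3]{AZ}, computes $\pi_{t}\times t(f_{i,j}^{m})$ as the rank-one operator $p\theta^{\c}_{e_{i}\otimes 1_{m},e_{j}\otimes 1_{m}}p$ (your $\theta_{e_{i}\otimes 1_{\max(i,m)},\,e_{j}\otimes 1_{j}}$ is the same operator), identifies the closed span of these with the corner, and handles $\I$ via the images of $f_{i,j}^{m}-f_{i,j}^{m+1}$ together with the embedding of $q\K(\ell^{2}(\N,\c_{0}))q$ into $p\K(\ell^{2}(\N,\c))p$ exactly as in the paper's Lemma \ref{K}. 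The only differences are that you make explicit some details the paper delegates to its references (the fullness of $p$ and $q$, and the passage through the function picture $\A$).
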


\begin{remark}
It follows from this Proposition that $\Prim \ker \varphi_{T}$ and $\Prim \ker \varphi_{T^{*}}$ are both homeomorphic to $\Prim \c$.
In fact,  since $\ker\varphi_{T^{*}}\simeq \c_{0}\times_{\tau}^{\piso}\N$ by \cite[Corollary 3.1]{Adji-Abbas}, we can
therefore deduce that $\c_{0}\times_{\tau}^{\piso}\N$ is Morita equivalent to $\K(\ell^{2}(\N,\c))$.
This is a useful fact for our subsequential work on the partial-isometric crossed product of lattice semigroup $\N\times \N$.
\end{remark}

\begin{proof}[Proof of Proposition \ref{A-A0}]
We only have to show that $\pi_{t}\times t(\ker\varphi_{T^{*}})=p\K(\ell^{2}(\N,\c))p$ and
$\pi_{t}\times t(\ker\varphi_{T}\cap\ker\varphi_{T^{*}})=q\K(\ell^{2}(\N,\c_{0}))q$.
The rest of arguments is done in \cite[Example 4.3]{AZ}.

Note that the algebra  $p\K(\ell^{2}(\N,\c))p$ is spanned by $\{p\theta^{\c}_{e_{i}\otimes 1_{n},e_{j}\otimes 1_{n}}p : i,j,n\in\N\}$.
Since $\pi_{t}\times t(f_{i,j}^{n})=p\theta^{\c}_{e_{i}\otimes 1_{n},e_{j}\otimes 1_{n}}p$ for every $i,j,n\in\N$, therefore
$\pi_{t}\times t(\ker\varphi_{T^{*}})=p\K(\ell^{2}(\N,\c))p$.

Similarly we consider that $\{q\theta^{\c_{0}}_{e_{i}\otimes 1_{\{n\}},e_{j}\otimes 1_{\{n\}}}q : i,j \le n\in\N\}$
spans $q\K(\ell^{2}(\N,\c_{0}))q$.
We use the equation
$\theta^{\c}_{e_{i}\otimes 1_{n},e_{j}\otimes 1_{0}}=\theta^{\c}_{e_{i}\otimes 1_{n},e_{j}\otimes 1_{n}}$ for every $n\in\N$, in the computations below,
to see that
\begin{align*}
\pi_{t}\times t(f_{i,j}^{n}-f_{i,j}^{n+1}) & = p (\theta^{\c}_{e_{i}\otimes 1_{n},e_{j}\otimes 1_{0}} - \theta^{\c}_{e_{i}\otimes 1_{n+1},e_{j}\otimes 1_{0}})p \\
& = p (\theta^{\c}_{(e_{i}\otimes 1_{n})-(e_{i}\otimes 1_{n+1}),(e_{j}\otimes 1_{0})}) p = p  (\theta^{\c}_{e_{i}\otimes 1_{\{n\}},e_{j}\otimes 1_{0}}) p \\
& = p (\theta^{\c}_{e_{i}\otimes 1_{\{n\}},e_{j}\otimes 1_{\{n\}}}) p.
\end{align*}
To convince that every  $p (\theta^{\c}_{e_{i}\otimes 1_{\{n\}},e_{j}\otimes 1_{\{n\}}}) p$ belongs to $q\K(\ell^{2}(\N,\c_{0}))q$,
we need the embedding $\iota^{\K}$ of $q\K(\ell^{2}(\N,\c_{0}))q$ in $p\K(\ell^{2}(\N,\c))p$ stated in Lemma \ref{K}.
In fact, every element $p (\theta^{\c}_{e_{i}\otimes 1_{\{n\}},e_{j}\otimes 1_{\{n\}}}) p$ spans $\iota^{\K}(q\K(\ell^{2}(\N,\c_{0}))q)$,
therefore  $\pi_{t}\times t(\ker \varphi_{T^{*}}\cap\ker\varphi_{T})=\iota^{\K}(q\K(\ell^{2}(\N,\c_{0}))q)$.
\end{proof}

\begin{lemma}\label{K}
Let $p\in \L(\ell^{2}(\N,\c))$ and $q\in \L(\ell^{2}(\N,\c_{0}))$ are the projections defined by
$(p(\textsf{a}))(n)=1_{n}\textsf{a}(n)$ for $\textsf{a} \in \ell^{2}(\N,\c)$, and
$(q (\textsf{a}))(n)=1_{n}\textsf{a}(n)$ for $\textsf{a}\in \ell^{2}(\N,\c_{0})$.
Then the full corner $q\K(\ell^{2}(\N,\c_{0}))q$  embeds naturally via
$\iota^{\K}(q\theta^{\c_{0}}_{\textsf{a},\textsf{b}}q) = p \theta^{\c}_{\textsf{a},\textsf{b}}p$ as an ideal
in $p\K(\ell^{2}(\N,\c))p$, and there exists a short exact sequence
\[ 0 \longrightarrow q\K(\ell^{2}(\N,\c_{0}))q \stackrel{\iota^{\K}}{\longrightarrow} p\K(\ell^{2}(\N,\c))p
\stackrel{q^{\K}}{\longrightarrow} \K(\ell^{2}(\N))\longrightarrow 0, \]
where $q^{\K}(p\theta^{\c}_{\textsf{a},\textsf{b}}p)=\theta_{x,y}$ with $x,y \in \ell^{2}(\N)$ are given by
$x_{i}=\lim_{n\rightarrow\infty} (1_{i}\textsf{a}(i))(n)$ and
$y_{i}=\lim_{n\rightarrow\infty} (1_{i}\textsf{b}(i))(n)$.
In particular we have
\begin{align*}
q^{\K}(p\theta^{\c}_{e_{i}\otimes 1_{n}, e_{j}\otimes 1_{m}}p) = q^{\K}(\theta^{\c}_{p(e_{i}\otimes 1_{n}), p(e_{j}\otimes 1_{m})})
& = q^{\K}(\theta^{\c}_{e_{i}\otimes 1_{n\vee i}, e_{j}\otimes 1_{m\vee j}}) \\
& = T_{i}(1-TT^{*})T_{j}^{*} \in \K(\ell^{2}(\N)).
\end{align*}
\end{lemma}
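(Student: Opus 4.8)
The plan is to produce both $\iota^{\K}$ and $q^{\K}$ from the single surjective homomorphism $L:\c\to\C$, $L(f)=\lim_{n}f(n)$, whose kernel is $\c_{0}$, by using the functoriality of compact operators on Hilbert modules under a change of coefficient algebra and then compressing everything by the corner projection $p$. Write $A:=\K(\ell^{2}(\N,\c))$ and $J:=\K(\ell^{2}(\N,\c_{0}))$. I would use two facts already recorded above: that $J$ is an ideal of $A$, and that $p$ carries the closed submodule $\ell^{2}(\N,\c_{0})$ into itself with $p|_{\ell^{2}(\N,\c_{0})}=q$ (indeed $1_{n}\textsf{a}(n)\in\c_{0}$ whenever $\textsf{a}(n)\in\c_{0}$).

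For the embedding, I would first note that because $\c_{0}$ is an ideal of $\c$ and $\ell^{2}(\N,\c_{0})=\ell^{2}(\N,\c)\cdot\c_{0}$, the canonical inclusion $J\hookrightarrow A$ sends $\theta^{\c_{0}}_{\textsf{a},\textsf{b}}$ to $\theta^{\c}_{\textsf{a},\textsf{b}}$ (the two inner products agree, only the ambient algebra changes). Compressing this inclusion and using $q=p|_{\ell^{2}(\N,\c_{0})}$ gives $\iota^{\K}(q\theta^{\c_{0}}_{\textsf{a},\textsf{b}}q)=\theta^{\c}_{q\textsf{a},q\textsf{b}}=\theta^{\c}_{p\textsf{a},p\textsf{b}}=p\theta^{\c}_{\textsf{a},\textsf{b}}p$, so $\iota^{\K}$ is the restriction to the corner of an injective $*$-homomorphism, hence itself injective, with image $pJp$. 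That $pJp$ is an ideal of $pAp$ is the general corner fact: for an ideal $J\lhd A$ and a projection $p\in M(A)$ one has $(pap)(pjp)=p\big(a(pjp)\big)p\in pJp$ and symmetrically, and moreover $pAp\cap J=pJp$.

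For the exact sequence, $L$ induces a pushforward $L_{*}:A\to\K(\ell^{2}(\N,\c)\otimes_{L}\C)=\K(\ell^{2}(\N))$ with $L_{*}(\theta^{\c}_{\textsf{a},\textsf{b}})=\theta_{\Lambda\textsf{a},\Lambda\textsf{b}}$, where $\Lambda\textsf{a}\in\ell^{2}(\N)$ has entries $(\Lambda\textsf{a})_{i}=L(\textsf{a}(i))$ and where $\ell^{2}(\N,\c)\otimes_{L}\C\cong\ell^{2}(\N)$; the standard exact sequence attached to the coefficient change $0\to\c_{0}\to\c\to\C\to0$ gives $\ker L_{*}=J$. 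I then set $q^{\K}:=L_{*}|_{pAp}$, so that $\ker q^{\K}=pAp\cap\ker L_{*}=pJp=\operatorname{im}\iota^{\K}$, which yields exactness in the middle, while injectivity of $\iota^{\K}$ yields it on the left. For surjectivity I would compute the image $\bar{p}$ of $p$ under the induced map $M(A)\to M(A/J)=B(\ell^{2}(\N))$: since $(p\textsf{a})(i)=1_{i}\textsf{a}(i)$ and $L(1_{i})=1$ for every $i$, the intertwining $\bar{p}\,\Lambda\textsf{a}=\Lambda(p\textsf{a})$ gives $(\bar{p}\,\Lambda\textsf{a})_{i}=L(1_{i}\textsf{a}(i))=(\Lambda\textsf{a})_{i}$, whence $\bar{p}=1$ and $q^{\K}(pAp)=\bar{p}\,\K(\ell^{2}(\N))\,\bar{p}=\K(\ell^{2}(\N))$.

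The formula for $q^{\K}$ then reads off directly: $q^{\K}(p\theta^{\c}_{\textsf{a},\textsf{b}}p)=L_{*}(\theta^{\c}_{p\textsf{a},p\textsf{b}})=\theta_{x,y}$ with $x_{i}=L((p\textsf{a})(i))=\lim_{n}(1_{i}\textsf{a}(i))(n)$ and similarly for $y$. For the displayed special case I would combine $p(e_{i}\otimes1_{n})=e_{i}\otimes1_{i\vee n}$ with $L(1_{i\vee n})=1$ to obtain $q^{\K}(p\theta^{\c}_{e_{i}\otimes1_{n},e_{j}\otimes1_{m}}p)=\theta_{e_{i},e_{j}}=E_{ij}$, and then the direct check $T_{i}(1-TT^{*})T_{j}^{*}=E_{ij}$, using that $1-TT^{*}$ is the rank-one projection onto $e_{0}$. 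I expect the delicate point to be the multiplier bookkeeping for $p$: one must justify that $L_{*}$ extends to multiplier algebras and verify $\bar{p}=1$ there, since it is exactly the equality $\bar{p}=1$ that forces the quotient corner to be all of $\K(\ell^{2}(\N))$ rather than a proper corner; by contrast the coefficient-change functoriality giving $\ker L_{*}=J$ together with the identification $\ell^{2}(\N,\c)\otimes_{L}\C\cong\ell^{2}(\N)$ can be quoted from the standard Hilbert-module theory.
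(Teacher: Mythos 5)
Your argument is essentially the paper's: the paper invokes Lemma 2.6 of Fowler--Muhly--Raeburn for $X=\ell^{2}(\N,\c)$ and the ideal $\c_{0}$, which is exactly your coefficient-change exact sequence $0\to\K(X\c_0)\to\K(X)\to\K(X\otimes_{L}\C)\to0$ induced by $L:\c\to\C$, and then compresses by $p$ using $\iota^{\K}(q\theta^{\c_0}_{\textsf{a},\textsf{b}}q)=\theta^{\c}_{p\textsf{a},p\textsf{b}}=p\theta^{\c}_{\textsf{a},\textsf{b}}p$. Your proposal is correct, and in fact slightly more careful than the paper on one point it leaves implicit, namely that the image $\bar{p}$ of $p$ in $B(\ell^{2}(\N))$ equals $1$ (via $L(1_{i})=1$), which is what makes the restriction of the quotient map to the corner $p\K(\ell^{2}(\N,\c))p$ still surject onto all of $\K(\ell^{2}(\N))$.
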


\begin{proof}
Apply \cite[Lemma 2.6]{FMR} for the module $X:=\ell^{2}(\N,\c)$ and $I=\c_{0}$.
In this case we have the submodule $XI= \ell^{2}(\N,\c_{0})$.
Note that if $\textsf{a}\in \ell^{2}(\N,\c)$, then every sequence $\textsf{a}(i)\in\c$ is convergent in $\C$, and
the map $\textsf{q}:\textsf{a} \mapsto (\textsf{q}(\textsf{a}))(i)=\lim_{n\rightarrow\infty}(\textsf{a}(i))(n)$ gives
$0\rightarrow \ell^{2}(\N,\c_{0}) \rightarrow \ell^{2}(\N,\c) \stackrel{\textsf{q}}{\rightarrow} \ell^{2}(\N)\rightarrow 0$.
Moreover \cite[Lemma 2.6]{FMR}  proves that
$\iota^{\K}(\theta^{XI}_{\textsf{a},\textsf{b}})=\theta^{X}_{\textsf{a},\textsf{b}}$ and
$q^{\K}(\theta^{X}_{\textsf{a},\textsf{b}})=\theta^{X/XI}_{\textsf{q(\textsf{a})},\textsf{q(\textsf{b})}}$ give the exactness of the sequence
\[ 0\longrightarrow \K(\ell^{2}(\N,\c_{0})) \stackrel{\iota^{\K}}{\longrightarrow} \K(\ell^{2}(\N,\c)) \stackrel{q^{\K}}{\longrightarrow} \K(\ell^{2}(\N))
\longrightarrow 0. \]
Since $\iota^{\K}(q\theta^{XI}_{\textsf{a},\textsf{b}}q)=\theta^{X}_{q(\textsf{a}),q(\textsf{b})}=p\theta^{X}_{\textsf{a},\textsf{b}}p$ for every
$\textsf{a}$ and $\textsf{b}$ in $\ell^{2}(\N,\c_{0})$,
the corner  $q\K(\ell^{2}(\N,\c_{0}))q$ is embedded into $p\K(\ell^{2}(\N,\c))p$ such that $q^{\K}$ is defined by
$q^{\K}(p\theta^{X}_{\textsf{a},\textsf{b}}p)=q^{\K}(\theta^{X}_{p(\textsf{a}),p(\textsf{b})})=\theta_{x,y}$ where
$x_{i}=\lim_{n\rightarrow\infty}(1_{i}\textsf{a}(i))(n)$ and $y_{i}=\lim_{n\rightarrow\infty}(1_{i}\textsf{b}(i))(n)$.
Thus we obtain the required exact sequence.
\end{proof}

\begin{prop}\label{c0-corner}
There are isomorphisms $\Theta:p\K(\ell^{2}(\N,\c))p\rightarrow\ker \varphi_{T}$ and $\Theta_{*}:p\K(\ell^{2}(\N,\c))p\rightarrow\ker \varphi_{T^{*}}$ defined by $\Theta(p\theta^{\c}_{e_{i}\otimes 1_{n},e_{j}\otimes 1_{n}}p)=g_{i,j}^{n}$ and
$\Theta_{*}(p\theta^{\c}_{e_{i}\otimes 1_{n},e_{j}\otimes 1_{n}}p)=f_{i,j}^{n}$ for all $i,j,n \in\N$ such that the following commutative diagram has all rows and columns exact:
\begin{center}
\begin{equation}\label{diagram00}
\begin{tikzpicture}
  \matrix(m)[matrix of math nodes,row sep=2em,column sep=2em,minimum width=2em]
  {\empty & 0 & 0 & 0 & \empty\\
  0 & q\K(\ell^{2}(\N,\c_{0}))q & p\K(\ell^{2}(\N,\c))p & \K(\ell^{2}(\N)) & 0\\
   0 & p\K(\ell^{2}(\N,\c))p & \c\times_{\tau}^{\piso}\N & \mathcal{T}(\Z) & 0\\
   0 & \K(\ell^{2}(\N)) & \mathcal{T}(\Z) & C(\TT) & 0\\
   \empty & 0 & 0 & 0 & \empty\\};
  \path[-stealth]
    (m-1-2) edge node [above] {} (m-2-2)
    (m-1-3) edge node [above] {} (m-2-3)
    (m-1-4) edge node [above] {} (m-2-4)
    (m-2-1) edge node [above] {} (m-2-2)
    (m-2-2) edge node [above] {$\i^{\K}$} (m-2-3) edge node [left] {$\i^{\K}\circ\alpha$} (m-3-2)
    (m-2-3) edge node [above] {$q^{\K}$} (m-2-4) edge node [right] {$\Theta_{*}$} (m-3-3)
    (m-2-4) edge node [] {} (m-2-5) edge node [left] {} (m-3-4)
    (m-3-1) edge node [] {} (m-3-2)
    (m-3-2) edge node [above] {$\Theta$} (m-3-3)  edge node [left] {$q^{\K}$} (m-4-2)
    (m-3-3) edge node [above] {$\Psi$} (m-4-4) edge node [above] {$\varphi_{T}$} (m-3-4) edge node [left] {$\varphi_{T^*}$} (m-4-3)
    (m-3-4) edge node [] {} (m-3-5) edge node [left] {$\psi_{T^*}$} (m-4-4)
    (m-4-1) edge node [] {} (m-4-2)
    (m-4-2) edge node [above] {} (m-4-3) edge node [left] {} (m-5-2)
    (m-4-3) edge node [above] {$\psi_{T}$} (m-4-4) edge node [left] {} (m-5-3)
    (m-4-4) edge node [] {} (m-4-5) edge node [left] {} (m-5-4);
\end{tikzpicture}
\end{equation}
\end{center}
\end{prop}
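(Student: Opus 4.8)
The plan is to read diagram \eqref{diagram00} as the $\Gamma=\Z$ specialization of the large diagram \eqref{diagram0} of Proposition \ref{BGamma}, in which the three ideals $\ker\varphi_{T}\cap\ker\varphi_{T^{*}}$, $\ker\varphi_{T^{*}}$ and $\ker\varphi_{T}$ sitting in its upper-left corner have been rewritten as corners of compact operators through the isomorphisms of Proposition \ref{A-A0}, and in which the top row and left column have been replaced by the exact sequence of Lemma \ref{K}. Here one uses that, for $\Gamma=\Z$, the commutator ideal $\C_{\Gamma}$ is $\K(\ell^{2}(\N))$, that $\T(\Gamma)=\T(\Z)$, and that $\hat\Gamma=\TT$ so that $C(\hat\Gamma)=C(\TT)$; the diagonal $\Psi$ is then literally the map of \eqref{diagram0}. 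Consequently the entire diagram will be obtained by transport of structure: the lower-right portion (rows three and four, columns three and four) is already present in \eqref{diagram0}, and only the upper-left block has to be matched against $\iota^{\K}$ and $q^{\K}$.

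First I would fix the vertical isomorphisms. By Proposition \ref{A-A0} the faithful representation $\pi_{t}\times t$ restricts to an isomorphism of $\ker\varphi_{T^{*}}$ onto $p\K(\ell^{2}(\N,\c))p$, and I set $\Theta_{*}:=(\pi_{t}\times t|_{\ker\varphi_{T^{*}}})^{-1}$; the value $\Theta_{*}(p\theta^{\c}_{e_{i}\otimes 1_{n},e_{j}\otimes 1_{n}}p)=f_{i,j}^{n}$ is immediate from the identity $\pi_{t}\times t(f_{i,j}^{n})=p\theta^{\c}_{e_{i}\otimes 1_{n},e_{j}\otimes 1_{n}}p$ established in the proof of that proposition. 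For $\Theta$ I would take $\Theta:=(\pi_{w}\times w|_{\ker\varphi_{T}})^{-1}$, which requires the parallel computation $\pi_{w}\times w(g_{i,j}^{n})=p\theta^{\c}_{e_{i}\otimes 1_{n},e_{j}\otimes 1_{n}}p$, carried out exactly as for $f$ but with $\pi_{w}\times w(v_{k})=pS_{k}^{*}p$ and $g_{i,j}^{n}=v_{i}^{*}v_{n}v_{n}^{*}(1-v^{*}v)v_{j}$. Both $\Theta$ and $\Theta_{*}$ are then isomorphisms onto $\ker\varphi_{T}$ and $\ker\varphi_{T^{*}}$ with the asserted generator values.

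Next I would check that the transported structure maps are those drawn in \eqref{diagram00}. On spanning elements one verifies $q^{\K}\circ(\pi_{t}\times t)=\varphi_{T}$ on $\ker\varphi_{T^{*}}$ and $q^{\K}\circ(\pi_{w}\times w)=\varphi_{T^{*}}$ on $\ker\varphi_{T}$: Lemma \ref{K} gives $q^{\K}(p\theta^{\c}_{e_{i}\otimes 1_{n},e_{j}\otimes 1_{n}}p)=T_{i}(1-TT^{*})T_{j}^{*}$, while substituting $\varphi_{T}(v_{k})=T_{k}$, $\varphi_{T^{*}}(v_{k})=T_{k}^{*}$ and using $T_{n}^{*}T_{n}=1$ gives $\varphi_{T}(f_{i,j}^{n})=\varphi_{T^{*}}(g_{i,j}^{n})=T_{i}(1-TT^{*})T_{j}^{*}$. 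Thus the single map $q^{\K}$ simultaneously realizes $\varphi_{T}\circ\Theta_{*}$ (top row) and $\varphi_{T^{*}}\circ\Theta$ (second column), and the squares joining $p\K(\ell^{2}(\N,\c))p$ to $\c\times_{\tau}^{\piso}\N$ and to $\T(\Z)$ commute. Likewise, since $\pi_{t}\times t$ sends the generator $f_{i,j}^{n}-f_{i,j}^{n+1}$ of $\I=\ker\varphi_{T}\cap\ker\varphi_{T^{*}}$ to $p\theta^{\c}_{e_{i}\otimes 1_{\{n\}},e_{j}\otimes 1_{\{n\}}}p=\iota^{\K}(q\theta^{\c_{0}}_{e_{i}\otimes 1_{\{n\}},e_{j}\otimes 1_{\{n\}}}q)$, the inclusion $\I\hookrightarrow\ker\varphi_{T^{*}}$ becomes $\iota^{\K}$, so the top row is exactly the sequence of Lemma \ref{K}.

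The one genuinely new point, and the step I expect to be the main obstacle, is the left column: the two representations identify $\I$ with $q\K(\ell^{2}(\N,\c_{0}))q$ by \emph{different} isomorphisms. Using $f_{i,j}^{m}-f_{i,j}^{m+1}=g_{m-i,m-j}^{m}-g_{m-i,m-j}^{m+1}$, the map $\pi_{t}\times t$ sends this element to $p\theta^{\c}_{e_{i}\otimes 1_{\{m\}},e_{j}\otimes 1_{\{m\}}}p$, whereas $\pi_{w}\times w$ sends it to $p\theta^{\c}_{e_{m-i}\otimes 1_{\{m\}},e_{m-j}\otimes 1_{\{m\}}}p$; the two differ by the block-wise index reversal $e_{i}\leftrightarrow e_{m-i}$ inside each finite block $P_{m}$. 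I would therefore define $\alpha:=(\pi_{w}\times w|_{\I})\circ(\pi_{t}\times t|_{\I})^{-1}$, an automorphism of $q\K(\ell^{2}(\N,\c_{0}))q$, and take the left vertical map to be $\iota^{\K}\circ\alpha$. Fixing the identification $q\K(\ell^{2}(\N,\c_{0}))q\cong\I$ by $(\pi_{t}\times t|_{\I})^{-1}$, both composites $\Theta_{*}\circ\iota^{\K}$ and $\Theta\circ(\iota^{\K}\circ\alpha)$ then equal the inclusion of $\I$ into $\c\times_{\tau}^{\piso}\N$, so the upper-left square commutes. Since $\alpha$ is an automorphism, $\iota^{\K}\circ\alpha$ has the same kernel and range as $\iota^{\K}$, so the left column is exact by Lemma \ref{K}; exactness of the remaining rows and columns is inherited from Proposition \ref{BGamma}, and the remaining squares commute by the matchings above together with the commutativity of \eqref{diagram0}. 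This assembles the required commutative diagram with exact rows and columns.
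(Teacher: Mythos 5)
Your proposal is correct and follows essentially the same route as the paper: specialize Proposition \ref{BGamma} to $(\c,\N,\tau)$, take $\Theta=(\pi_{w}\times w)^{-1}$ and $\Theta_{*}=(\pi_{t}\times t)^{-1}$ from Proposition \ref{A-A0}, match the top row and left column with the sequence of Lemma \ref{K}, and absorb the discrepancy between the two identifications of $\I$ with $q\K(\ell^{2}(\N,\c_{0}))q$ into the block-wise index-reversal automorphism $\alpha$ (which the paper defines directly on spanning elements, and you recover as $(\pi_{w}\times w|_{\I})\circ(\pi_{t}\times t|_{\I})^{-1}$). Your write-up is in fact more explicit than the paper's about why the squares commute, but the underlying argument is the same.
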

\begin{proof}
We apply Proposition \ref{BGamma} to the system $(\c,\N,\tau)$.
Let $\{v_{i} : i\in\N\}$ denote the generators of $\c\times_{\tau}^{\piso}\N$, and $\{\delta_{i}:i\in\Z\}$ the generator of $C^{*}(\Z)$.
Then the homomorphism $\Psi:\c\times_{\tau}^{\piso}\N\rightarrow C(\TT)$ given by Proposition \ref{BGamma} satisfies
$\Psi(v_{i})=\delta^{*}_{i}=(z\mapsto \overline{z}^{i}) \in C(\TT)$ for every $i\in\N$.
Moreover $\Theta=(\pi_{w}\times w)^{-1}$ and $\Theta_{*}=(\pi_{t}\times t)^{-1}$, by Proposition \ref{A-A0},  satisfy
$\Theta(p\theta^{\c}_{e_{i}\otimes 1_{n},e_{j}\otimes 1_{n}}p)=g_{i,j}^{n}$ and
$\Theta_{*}(p\theta^{\c}_{e_{i}\otimes 1_{n},e_{j}\otimes 1_{n}}p)=f_{i,j}^{n}$ for all $i,j,n \in\N$.
So the first row sequence is exact, and which is equivalent to the one of (\ref{diagram0}) for $\Gamma^{+}=\N$ because
\begin{equation*}
\begin{diagram}\dgARROWLENGTH=0.5\dgARROWLENGTH
\node{0} \arrow{e}\node{\I} \arrow{s,l}{\pi_{t}\times t}\arrow{e,t}{\id}
\arrow{s}\arrow{e}\node{\ker\varphi_{T^{*}}}\arrow{s,l}{\pi_{t}\times t}\arrow{e,t}{\varphi_{T}|}
\node{\K(\ell^{2}(\N))}\arrow{s,l}{\id}\arrow{e} \node{0}\\
\node{0} \arrow{e} \node{q\K(\ell^{2}(\N,\c_{0}))q} \arrow{e,t}{\i^{\K}} \node {p\K(\ell^{2}(\N,\c))p} \arrow{e,t}{q^{\K}} \node {\K(\ell^{2}(\N))}
\arrow{e} \node{0.}
\end{diagram}
\end{equation*}

For the first column we use the automorphism $\alpha$ of $q\K(\ell^{2}(\N,\c_{0}))q$ defined on its spanning element by
$\alpha(q\theta^{\c_{0}}_{e_{i}\otimes 1_{\{n\}},e_{j}\otimes 1_{\{n\}}}q)=q\theta^{\c_{0}}_{e_{n-i}\otimes 1_{\{n\}},e_{n-j}\otimes 1_{\{n\}}}q$.
Then by inspections on the spanning elements of the algebras involved, we can see that the diagram (\ref{diagram00}) commutes.
\end{proof}

Thus we know from the diagram that the set $\Prim \c\times_{\tau}^{\piso}\N$ is given by the sets
$\Prim \K(\ell^{2}(\N,\c))$ and $\Prim\T(\Z)$.
Since
\[ \Prim \T(\Z)=\Prim \K(\ell^{2}(\N))\cup \Prim C(\TT)=\{0\}\cup \TT, \] and
$\Prim \K(\ell^{2}(\N,\c))$ is homeomorphic to
\[\Prim \c=\Prim \c_{0}\cup \Prim \C \simeq\N\cup\{\infty\}, \]
therefore $\Prim \c\times_{\tau}^{\piso}\N$ consists of a copy of $\{I_{n}\}$ of $\N$ embedded as an open subset,
a copy of $\{J_{z}\}$ of $\TT$ embedded as a closed subset.
We identify these ideals in Proposition \ref{prim-a0} and Lemma \ref{prim-CT}.

Note for now that
$\ker\varphi_{T}$ and $\ker\varphi_{T^{*}}$ are primitive ideals of $\c\times_{\tau}^{\piso}\N$:
the Toeplitz representation $T$ of $\T(\Z)$ on $\ell^{2}(\N)$ is irreducible by \cite[Theorem 3.13]{murphy}, and
$\varphi_{T}$ and $\varphi_{T^{*}}$ are surjective homomorphisms of $\c\times_{\tau}^{\piso}\N$ onto $\T(\Z)$, so
$T\circ \varphi_{T}$ and $T\circ \varphi_{T^{*}}$ are irreducible representations of $\c\times_{\tau}^{\piso}\N$ on $\ell^{2}(\N)$.
Moreover, irreducibility of the representation
$\id\circ q^{\K}:p\K(\ell^{2}(\N,\c))p \stackrel{q^{\K}}{\rightarrow} \K(\ell^{2}(\N))\stackrel{\id}{\rightarrow} B(\ell^{2}(\N))$
implies the kernel $\I= \ker \varphi_{T} \cap \ker \varphi_{T^{*}}\simeq q\K(\ell^{2}(\N,\c_{0}))q$ of $\id\circ q^{\K}$
is a primitive ideal of $p\K(\ell^{2}(\N,\c))p \simeq \ker\varphi_{T}$.
Similarly, $\I$ is a primitive ideal of $\ker\varphi_{T^{*}}\simeq p\K(\ell^{2}(\N,\c))p$.
Although $\I\not\in \Prim {\bf c} \times_{\tau}^{\piso}\N$, the ideal $\I$ is essential in $\c\times_{\tau}^{\piso}\N$ by \cite[Lemma 6.8]{LR},
so the space $\Prim \I\simeq \Prim \c_{0}$ is dense in $\Prim {\bf c} \times_{\tau}^{\piso}\N$.

Next consider that $\K(\ell^{2}(\N))=\overline{\newspan}\{e_{ij}:=T_{i}(1-TT^{*})T_{j}^{*} : i,j \in\N\}$, and recall that there is
a natural isomorphism $\Lambda$ of $\K(\ell^{2}(\N,\c))\simeq \K(\ell^{2}(\N))\otimes\c$ onto the algebra
\[ C(\N\cup\{\infty\},\K(\ell^{2}(\N))):=\{f:\N\rightarrow \K(\ell^{2}(\N)) : \lim_{n} f(n) \text{ exists in } \K(\ell^{2}(\N))\} \] given by
$\Lambda(e_{ij}\otimes 1_{k})(n)= 1_{k}(n)e_{ij}$ for $i,j,k,n \in \N$.
Then $\Lambda(p\K(\ell^{2}(\N,\c))p)\subset \A$ because
\begin{align*}
[\Lambda(p (e_{ij}\otimes 1_{m})p)](n) & = [\Lambda(e_{ij}\otimes 1_{m\vee i\vee j})](n)  \\
& = \left\{ \begin{array}{ll} e_{ij} & \mbox{ if } n\ge m\vee i\vee j \\ 0 & \mbox{ otherwise } \end{array}\right. \\
& = \pi_{n}(f_{i,j}^{m})=\pi_{n}^{*}(g_{i,j}^{m}).
\end{align*}
Since $\Lambda=\pi\circ \Theta_{*}=\pi^{*}\circ\Theta$, $\Lambda$ maps the corners $p\K(\ell^{2}(\N,\c))p$
and  $q\K(\ell^{2}(\N,\c_{0}))q$ isomorphically onto the algebra $\A$ and $\A_{0}$ respectively.
Construction of this isomorphism in \cite[\S 6]{LR} involves the representations $\pi_{n}$ and $\pi_{n}^{*}$, for each $n\in\N$,
of  $\c\times_{\tau}^{\piso}\N$ on $\ell^{2}(\N)$ that are associated to the partial-isometric representations
$k\mapsto P_{n}T_{k}P_{n}$ and $k\mapsto P_{n}T_{k}^{*}P_{n}$ respectively,
where $P_{n}:=1-T_{n+1}T_{n+1}^{*}$ is the projection onto $H_{n}:=\newspan\{e_{i} : i=0,1,2,\cdots,n\}$.
For every $a\in\ker \varphi_{T^{*}}$, the sequence $\{\pi_{n}(a)\}_{n\in\N}$ is convergent in $\K(\ell^{2}(\N))$, and
then the map $a\in\ker \varphi_{T^{*}}\mapsto \pi(a):=\{\pi_{n}(a)\}_{n\in\N}\in\A$ defines the isomorphism.

These observations suggest that an extension of $\pi$ should give a representation  of $\c\times_{\tau}^{\piso}\N$ in the algebra
$C_\textrm{b}(\N,B(\ell^{2}(\N)))$, and then primitive ideals are the kernels of evaluation maps.
But we can consider a smaller algebra which gives more information on the image of $\pi$. Note that the algebra $C(\N\cup\{\infty\},B(\ell^{2}(\N)))$ is too small to consider, because the sequence $(P_{n}T_{k}P_{n})_{n\in\N}$ as we see, does not converge to $T_{k}$ in the operator norm on $B(\ell^{2}(\N))$, but it converges strongly to $T_{k}$.
Therefore we consider the set $C_\textrm{b}(\N\cup\{\infty\},B(\ell^{2}(\N))_{*-\textrm{s}})$ of functions $\xi:\N\rightarrow B(\ell^{2}(\N))$ such that $\lim_{n}\xi_{n}$ exists in the *-strong topology on $B(\ell^{2}(\N))$, and which satisfies $\|\xi\|_{\infty}:=\sup_{n}\|\xi_{n}\|<\infty$.
By \cite[Lemma 2.56]{RW}, it is a $C^*$-algebra with the pointwise operation from $B(\ell^{2}(\N))$ and the norm $\|\cdot\|_{\infty}$.
Then let
\begin{align*}
\mathcal{B}:= \{ f:\N\rightarrow B(\ell^{2}(\N))  & ~:~ \sup_{n\in\N} \|f(n)\|_{B(\ell^{2}(\N))}<\infty,~ f(n)\in P_{n}B(\ell^{2}(\N))P_{n} \text{ and } \\
 & \lim_{n\rightarrow\infty} f(n) \text{ exists in the *-strong topology on } B(\ell^{2}(\N)) \}. \end{align*}
Note that $\mathcal{B}$ is a subalgebra of $C_\textrm{b}(\N\cup\{\infty\},B(\ell^{2}(\N))_{*-\textrm{s}})$ because $P_{n}B(\ell^{2}(\N))P_{n}\simeq B(H_{n})$
is closed in $B(\ell^{2}(\N))$ for every $n\in\N$, and $\mathcal{B}$ has an identity $1_{\mathcal B}=(P_{0},P_{1},P_{2},\cdots)$.

\begin{prop}
There are faithful representations $\pi$ and $\pi^{*}$ of $\c\times_{\tau}^{\piso}\N$ in the algebra $\mathcal{B}$, which
defined on each generator $v_{k}\in \c\times_{\tau}^{\piso}\N$ by
\[ \pi(v_{k})(n):=\pi_{n}(v_{k})= P_{n}T_{k}P_{n} \text{ and } \pi^{*}(v_{k})(n):=\pi_{n}^{*}(v_{k})=P_{n}T_{k}^{*}P_{n} \text{ for } n\in\N.\]
These representations $\pi$ and $\pi^{*}$ are the extension of isomorphisms $\pi:\ker\varphi_{T^{*}}\rightarrow \A$ and
$\pi^{*}:\ker\varphi_{T}\rightarrow \A$ of \cite[Theorem 6.1]{LR}.
\end{prop}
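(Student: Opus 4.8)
The plan is to glue the individually constructed representations $\pi_{n}$ into one map and then locate its range inside $\mathcal{B}$. For each $n$ the nondegenerate representation of $\c$ sending $1_{\c}\mapsto P_{n}$, together with the partial-isometric representation $k\mapsto P_{n}T_{k}P_{n}$ of $\N$, forms a covariant partial-isometric representation of $(\c,\N,\tau)$, which by universality integrates to the representation $\pi_{n}$ of $\c\times_{\tau}^{\piso}\N$ on $\ell^{2}(\N)$ used in \cite[Theorem 6.1]{LR}. I would set $\pi(a):=(\pi_{n}(a))_{n\in\N}$, a priori an element of $\ell^{\infty}(\N,B(\ell^{2}(\N)))$; since each $\pi_{n}$ is a $*$-homomorphism, so is $\pi$. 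The whole content then lies in checking that $\pi(a)\in\mathcal{B}$ for every $a$.

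The boundedness $\sup_{n}\|\pi_{n}(a)\|\le\|a\|$ is automatic, and since $\c\times_{\tau}^{\piso}\N$ is unital with $\pi_{n}(1)=P_{n}$ we get $\pi_{n}(a)=P_{n}\pi_{n}(a)P_{n}\in P_{n}B(\ell^{2}(\N))P_{n}$. The decisive point is the existence of the $*$-strong limit $\lim_{n}\pi_{n}(a)$, and I would obtain it by showing that
\[ \mathcal{C}:=\{a\in\c\times_{\tau}^{\piso}\N : (\pi_{n}(a))_{n}\text{ converges }*\text{-strongly}\} \]
is a norm-closed $*$-subalgebra containing the generators, whence $\mathcal{C}=\c\times_{\tau}^{\piso}\N$. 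It contains the generators because $P_{n}\to 1$ strongly forces $P_{n}T_{k}P_{n}\to T_{k}$ $*$-strongly, and likewise $\pi_{n}(i_{\c}(f))\to\pi_{T}(f)$; it is a $*$-subalgebra because on norm-bounded sets multiplication and involution are jointly $*$-strong continuous, the very fact making $C_{\mathrm b}(\N\cup\{\infty\},B(\ell^{2}(\N))_{*-\mathrm s})$ a $C^{*}$-algebra \cite[Lemma 2.56]{RW}; and it is norm-closed by an $\varepsilon/3$ argument based on the uniform estimate $\|\pi_{n}(a)-\pi_{n}(b)\|\le\|a-b\|$ and completeness of that ambient algebra. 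I expect this $*$-strong convergence to be the main obstacle, since it is the one place where the passage from generators to arbitrary elements is genuinely analytic rather than formal.

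For faithfulness I would use the essential ideal $\I=\ker\varphi_{T}\cap\ker\varphi_{T^{*}}$ of $\c\times_{\tau}^{\piso}\N$ \cite[Lemma 6.8]{LR}. By \cite[Theorem 6.1]{LR} the restriction of $\pi$ to $\ker\varphi_{T^{*}}$ is an isomorphism onto $\A$: indeed for $a\in\ker\varphi_{T^{*}}$ each $\pi_{n}(a)$ lies in $P_{n}\K(\ell^{2}(\N))P_{n}$ and $\lim_{n}\pi_{n}(a)$ exists in norm, so $\pi(a)\in\A\subseteq\mathcal{B}$ and $\pi$ genuinely extends the map of \cite[Theorem 6.1]{LR}. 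In particular $\pi$ is injective on $\ker\varphi_{T^{*}}$, hence on $\I\subseteq\ker\varphi_{T^{*}}$. Since $\ker\pi$ is an ideal with $\ker\pi\cap\I\subseteq\ker(\pi|_{\I})=0$ and $\I$ is essential, we conclude $\ker\pi=0$.

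The assertions for $\pi^{*}$ follow verbatim with $T$ and $T^{*}$ interchanged: $k\mapsto P_{n}T_{k}^{*}P_{n}$ yields the $\pi_{n}^{*}$, the $*$-strong limits now recover $\varphi_{T^{*}}$ on the generators, and $\pi^{*}$ restricts to the isomorphism $\ker\varphi_{T}\to\A$ of \cite[Theorem 6.1]{LR}, so faithfulness again comes from $\I\subseteq\ker\varphi_{T}$ being essential. Everything outside the second paragraph is thus bookkeeping via the universal property together with the essential-ideal argument.
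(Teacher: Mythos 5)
Your proposal is correct, but it diverges from the paper's proof in two places, most significantly in how faithfulness is established. For the construction, the paper simply observes that $k\mapsto W_{k}$ with $W_{k}(n)=P_{n}T_{k}P_{n}$ is a partial-isometric representation of $\N$ in the unital $C^{*}$-algebra $\mathcal{B}$ and invokes the universal property of $\c\times_{\tau}^{\piso}\N$ once, so that the image automatically lies in $\mathcal{B}$; you instead glue the individual $\pi_{n}$ and prove membership in $\mathcal{B}$ by showing the set of elements with $*$-strongly convergent image sequence is a norm-closed $*$-subalgebra containing the generators. These are the same underlying fact (both rest on \cite[Lemma 2.56]{RW} making $C_{\mathrm b}(\N\cup\{\infty\},B(\ell^{2}(\N))_{*-\mathrm s})$ a $C^{*}$-algebra), just packaged differently, and your version has the small merit of making the ``genuinely analytic'' step explicit. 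For faithfulness the routes are genuinely different: the paper applies the combinatorial criterion of \cite[Proposition 5.4]{LR}, computing the elements $\xi_{i,j}^{r}=(\pi(1)-\pi(v_{r})^{*}\pi(v_{r}))(\pi(v_{i})\pi(v_{i})^{*}-\pi(v_{j})\pi(v_{j})^{*})$ on suitable basis vectors in three cases to see they are nonzero, which is self-contained at the level of generators and needs neither \cite[Theorem 6.1]{LR} nor \cite[Lemma 6.8]{LR}; you instead use that $\pi$ restricts on $\ker\varphi_{T^{*}}$ to the isomorphism onto $\A$ of \cite[Theorem 6.1]{LR} (so $\ker\pi\cap\I=0$) together with essentiality of $\I$. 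Your argument is shorter and has the side benefit of actually verifying the final clause of the proposition --- that $\pi$ extends the isomorphism of \cite[Theorem 6.1]{LR} --- which the paper's proof leaves to the surrounding discussion, at the cost of leaning on two heavier imported results. There is no circularity in doing so, since \cite[Theorem 6.1]{LR} is an external input and its isomorphism is literally $a\mapsto(\pi_{n}(a))_{n}$, so both steps of your essential-ideal argument are sound.
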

\begin{proof}
The map $\pi$ is induced by the partial-isometric representation $k \mapsto W_{k}$ where $W_{k}(n)=P_{n}T_{k}P_{n}$,  and similarly for $\pi^{*}$
by $k \mapsto S_{k}$ where $S_{k}(n)=P_{n}T_{k}^{*}P_{n}$ for $n\in\N$.
These are unital representations: $\pi(1)=\pi(v_{0})=(P_{0},P_{1},P_{2},\cdots)=\pi^{*}(1)$.

By \cite[Proposition 5.4]{LR}, the representation $\pi$ is faithful if and only if for any $r>0$ and $i<j$ in $\N$, we have
$\xi_{i,j}^{r}\in \mathcal B$ for which
\[ \xi_{i,j}^{r}:=(\pi(1)-\pi(v_{r})^{*}\pi(v_{r}))(\pi(v_{i})\pi(v_{i})^{*}-\pi(v_{j})\pi(v_{j})^{*}) \text{ is a nonzero element of }  \mathcal B. \]
Let $r>0$  and $i<j \in\N$, then we consider the three cases $0<r \le i <j$, $i<r < j$ and $i<j\le r$ separately.
If $0<r \le i <j$, then
\begin{align*}
\xi_{i,j}^{r}(i) & =(P_{i}-\pi_{i}(v_{r})^{*}\pi_{i}(v_{r}))(\pi_{i}(v_{i})\pi_{i}(v_{i})^{*}-\pi_{i}(v_{j})\pi_{i}(v_{j})^{*}) \\
 & = (P_{i}-P_{i}T_{r}^{*}P_{i}T_{r}P_{i})(P_{i}T_{i}P_{i}T_{i}^{*}P_{i}-P_{i}T_{j}P_{i}T_{j}^{*}P_{i}) \\
 & = (P_{i}-P_{i}T_{r}^{*}T_{r}P_{i-r}P_{i})(P_{i}T_{i}T_{i}^{*}P_{i}-0) \\
 & = (P_{i}-P_{i-r})(P_{i}T_{i}T_{i}^{*}P_{i})
\end{align*}
and that $[\xi_{i,j}^{r}(i)](e_{i})=(P_{i}-P_{i-r})(e_{i})=e_{i}$.
If $i<j\le r$ then similar computations show that  $[\xi_{i,j}^{r}(i)](e_{i})=[P_{i}(P_{i}T_{i}T_{i}^{*}P_{i})](e_{i})=e_{i}$, and
for $i<r<j$ we have $[\xi_{i,j}^{r}(r)](e_{r})=(P_{r}-P_{0})(e_{r})=e_{r}$.
Thus $\xi_{i,j}^{r} \neq 0$ in $\mathcal B$.
The same outline of arguments is valid to show the representation $\pi^{*}$ is also faithful.
\end{proof}

So we have for every $n\in\N$ the representations $\pi_{n}=\varepsilon_{n}\circ \pi$ and $\pi_{n}^{*}=\varepsilon_{n}\circ \pi^{*}$
of $\c\times_{\tau}^{\piso}\N$ on $H_{n}$,
where $\varepsilon_{n}$ are the evaluation map of  $C_{\textrm{b}}(\N\cup\{\infty\}, B(\ell^{2}(\N))_{*-\textrm{s}})$.
Hence they are irreducible, indeed every nonzero vector of the subspace $H_{n}$ of $\ell^{2}(\N)$ is cyclic for $\pi_{n}^{*}$:
if $(h_{0},h_{1},\cdots,h_{n})\in H_{n}$ with $h_{j}\neq 0$ for some $j$,
then for every $i\in\{0,1,2,\cdots,n\}$, we have
\begin{align*}
(\pi_{n}^{*}(g_{i,j}^{n}))(h_{0},h_{1},\cdots,h_{n})  & = [T_{i}(1-TT^{*})T_{j}^{*}](h_{0},h_{1},\cdots,h_{n})\\
& = (0,\cdots,h_{j},\cdots,0),  \mbox{ where $h_{j}$ is in the $i$-th slot,}
\end{align*}
so $\pi_{n}^{*}(\frac{1}{h_{j}}~g_{i,j}^{n})(h)=e_{i}$, and therefore $H_{n}=\newspan\{\pi^{*}_{n}(\xi)h : \xi \in \c\times_{\tau}^{\piso}\N \}$.
Same arguments work for $\pi_{n}$.

Note for every $n\in\N$ that $\pi_{n}(f_{i,j}^{m})=e_{ij}=\pi_{n}(g_{n-i,n-j}^{k})$ for all $0\le i,j,m,k\le n$, and similarly
$\pi_{n}^{*}(g_{i,j}^{m})=e_{ij}=\pi_{n}^{*}(f_{n-i,n-j}^{k})$ for all $0\le i,j,m,k\le n$.
Thus every $f_{i,j}^{m}-g_{n-i,n-j}^{k}$ is contained in $\ker \pi_{n}$, and similarly $(g_{i,j}^{m}-f_{n-i,n-j}^{k})\in \ker \pi_{n}^{*}$.
We shall see many more elements of $\ker \pi_{n}$ as well as $\ker \pi_{n}^{*}$ in Proposition \ref{prim-a0}.

But now we recall that for $n\in\N$ the partial-isometric representation $J^{n}:\N\rightarrow B(H_{n})$ in \cite[\S 3]{LR}
defined by $J_{t}^{n}(e_{r})=\left\{ \begin{array}{ll} e_{t+r} & \text{ if } r+t \in \{0,1,\cdots,n\} \\ 0 & \text{ otherwise, }
\end{array}\right.$ induces the representation $\pi_{J^{n}}^{\N}\times J^{n}$ of $(\c\times_{\tau}^{\piso}\N,v)$ on $H_{n}$.
In fact $\pi_{J^{n}}^{\N}\times J^{n}=\pi_{n}$, because for every $k\in\N$ we have
$(\pi_{J^{n}}^{\N}\times J^{n}(v_{k}))(e_{r})=J^{n}_{k}(e_{r})=P_{n}T_{k}P_{n}(e_{r})$ where $r\in \{0,1,2,\cdots,n\}$.

The ideal $\ker\oplus_{r=0}^{n}\pi_{J^{r}}^{\N}\times J^{r}$ appears in the structure of $\c\times_{\tau}^{\piso}\N$
\cite[Lemma 5.7]{LR}. To be more precise about it, we need some results in \cite[\S 5]{LR} related to the system $(\C^{n+1},\tau,\N)$.
The crossed product $\C^{n+1}\times_{\tau}^{\piso}\N$ is the universal $C^*$-algebra generated by a canonical partial-isometric representation
$w$ of $\N$ such that $w_{r}=0$ for $r\ge n+1$.
Let $q_{n}: (\c\times_{\tau}^{\piso}\N,v)\rightarrow (\C^{n+1}\times_{\tau}^{\piso}\N,w)$ be the homomorphism induced by
$w:\N\rightarrow \C^{n+1}\times_{\tau}^{\piso}\N$, and note that it is surjective.
Then Lemma 5.7 of \cite{LR} shows that
$\ker q_{n}=\ker (\oplus_{r=0}^{n}\pi_{J^{r}}^{\N}\times J^{r})=\bigcap_{r=0}^{n} \ker(\pi_{J^{r}}^{\N}\times J^{r})$.
So by these arguments we obtain the following equation
\begin{equation}\label{qn}
 \ker q_{n}=\bigcap_{r=0}^{n}\ker\pi_{r} \text{ for every } n\in\N. \end{equation}

\begin{lemma}\label{Lnn}
For $n\in\N$, let $L_{n}$ be the ideal of $(\c\times_{\tau}^{\piso}\N,v)$ generated by $\{v_{r}: r\ge n+1\}$.
Then $L_{n}=\ker q_{n}$, and it is isomorphic to
\begin{equation}\label{LN}
 \{\xi \in \pi(\c\times_{\tau}^{\piso}\N)\subset C_{\textrm{b}}(\N\cup\{\infty\}, B(\ell^{2}(\N))_{*-\textrm{s}}) : \xi\equiv 0 \text{ on } \{0,1,2,\cdots,n\}\}. \end{equation}
\end{lemma}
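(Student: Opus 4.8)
The plan is to establish the two assertions separately: first the identification $L_{n}=\ker q_{n}$ of ideals, and then the description of this common ideal as the set (\ref{LN}), the latter being essentially a transcription through the faithful representation $\pi$ once the former is known.

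For the equality $L_{n}=\ker q_{n}$, the inclusion $L_{n}\subseteq\ker q_{n}$ is immediate: since $q_{n}(v_{r})=w_{r}=0$ for every $r\ge n+1$ and $\ker q_{n}$ is an ideal containing the generators of $L_{n}$, it contains $L_{n}$. For the reverse inclusion I would pass to the quotient map $Q:\c\times_{\tau}^{\piso}\N\rightarrow(\c\times_{\tau}^{\piso}\N)/L_{n}$ and observe that $k\mapsto Q(v_{k})$ is a partial-isometric representation of $\N$ with $Q(v_{r})=0$ for all $r\ge n+1$. By the universal property of $\C^{n+1}\times_{\tau}^{\piso}\N$ — generated by a canonical partial-isometric representation $w$ of $\N$ vanishing beyond $n$ — this representation induces a homomorphism $\widetilde{q}:\C^{n+1}\times_{\tau}^{\piso}\N\rightarrow(\c\times_{\tau}^{\piso}\N)/L_{n}$ with $\widetilde{q}(w_{k})=Q(v_{k})$. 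Since $\c\times_{\tau}^{\piso}\N$ is generated by the single power partial isometry $v$, and $\widetilde{q}\circ q_{n}$ agrees with $Q$ on $v$, the two homomorphisms coincide; hence $\ker q_{n}\subseteq\ker Q=L_{n}$, giving $L_{n}=\ker q_{n}$.

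For the isomorphism, I would use that $\pi$ is a faithful representation of $\c\times_{\tau}^{\piso}\N$ in $\mathcal{B}\subseteq C_{\textrm{b}}(\N\cup\{\infty\},B(\ell^{2}(\N))_{*-\textrm{s}})$, so $\pi$ restricts to an isomorphism of $L_{n}$ onto $\pi(L_{n})=\pi(\ker q_{n})$, and it remains only to compute this image. Writing $\pi_{r}=\varepsilon_{r}\circ\pi$ for the evaluation at $r$, an element $\xi=\pi(a)$ of $\pi(\c\times_{\tau}^{\piso}\N)$ satisfies $\xi\equiv 0$ on $\{0,1,\dots,n\}$ precisely when $\pi_{r}(a)=\varepsilon_{r}(\pi(a))=0$ for all $0\le r\le n$, that is, when $a\in\bigcap_{r=0}^{n}\ker\pi_{r}$. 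By (\ref{qn}) this intersection is exactly $\ker q_{n}$, so $\xi\equiv 0$ on $\{0,1,\dots,n\}$ if and only if $\xi\in\pi(\ker q_{n})$. Thus $\pi(L_{n})$ is precisely the set (\ref{LN}), and $\pi$ implements the claimed isomorphism.

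I expect the only delicate point to be the step $\ker q_{n}\subseteq L_{n}$, where one must verify that $k\mapsto Q(v_{k})$ genuinely falls under the universal property defining $\C^{n+1}\times_{\tau}^{\piso}\N$; everything else is bookkeeping built on (\ref{qn}) and the faithfulness of $\pi$. This verification is secured by the fact that in the system $(\C^{n+1},\tau,\N)$ the generator satisfies $\tau(1_{n})=1_{n+1}=0$, forcing $w_{n+1}w_{n+1}^{*}=0$ and hence $w_{r}=0$ for $r\ge n+1$, so that partial-isometric representations of $\N$ vanishing beyond $n$ are exactly the data classified by that crossed product; the remaining covariance checks (through $\pi_{v}(1_{s})=v_{s}v_{s}^{*}$ carried across $Q$) are the routine ones already invoked for the universal property of the partial-isometric crossed product.
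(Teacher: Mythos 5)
Your proof is correct and follows essentially the same route as the paper's: the inclusion $L_{n}\subseteq\ker q_{n}$ from the generators, the reverse inclusion via the universal property of $\C^{n+1}\times_{\tau}^{\piso}\N$ applied to a map killing $L_{n}$ (the paper uses a Hilbert-space representation with kernel $L_{n}$ where you use the quotient map, an inessential difference), and the identification of $\pi(L_{n})$ with the set (\ref{LN}) through equation (\ref{qn}) and the faithfulness of $\pi$. No gaps.
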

\begin{proof}
We have $L_{n}\subset\ker q_{n}$ because $q_{n}(v_{k})=0$ for all $k\ge n+1$.
To see $\ker q_{n}\subset L_{n}$, let $\rho$ be a representation  of $\c\times_{\tau}^{\piso}\N$ on $H_{\rho}$ where $\ker \rho=L_{n}$.
Since $\rho(v_{t})=0$ for every $t\ge n+1$, by the universal property of $\C^{n+1}\times_{\tau}^{\piso}\N$, there exists a representation
$\tilde{\rho}$ of $\C^{n+1}\times_{\tau}^{\piso}\N$ on $H_{\rho}$ which satisfies $\tilde{\rho}\circ q_{n}=\rho$.
Thus $\ker q_{n}\subset \ker\rho= L_{n}$.

Next we show that $\pi(L_{n})$ and (\ref{LN}) are equal.
Let $r\ge n+1$, and consider $\pi(v_{r})$ is the sequence $(P_{i}T_{r}P_{i})_{i\in\N}$.
If $0\le i\le n$ then $0\le i+1\le n+1\le r$ and
\[ P_{i}T_{r}P_{i}=(1-T_{i+1}T_{i+1}^{*})T_{r}P_{i}=(1-T_{i+1}T_{i+1}^{*})T_{i+1}T_{r-(i+1)}P_{i}=0. \]
So $\pi(L_{n})$ is a subset of (\ref{LN}).
For the other inclusion, suppose $f\in \pi(\c\times_{\tau}^{\piso}\N)$ in which $f(i)=0$ for all $0\le i\le n$.
Since $f=\pi(\xi)$ for some $\xi \in \c\times_{\tau}^{\piso}\N$, and $\pi(\xi)(i)=\pi_{i}(\xi)=f(i)$ for all $i\in\N$,
we therefore have $\pi_{i}(\xi)= f(i)=0$ for all $0\le i\le n$.
Thus $\xi \in \cap_{i=0}^{n}\ker\pi_{i}=\ker q_{n}$, and hence $f=\pi(\xi)\in \pi(L_{n})$.
\end{proof}

Let $\pi_{\infty}:=\lim_{n}\pi_{n}$ and $\pi_{\infty}^{*}:=\lim_{n}\pi_{n}^{*}$ where the limits are taken with respect to the strong topology of
$B(\ell^{2}(\N))$.
Then $\pi_{\infty}$ and $\pi_{\infty}^{*}$ are the irreducible representations $\varphi_{T}:v_{k}\mapsto T_{k}$ and
$\varphi_{T^{*}}:v_{k}\mapsto T_{k}^{*}$ of $\c\times_{\tau}^{\piso}\N$ on $H_{\infty}:=\ell^{2}(\N)$.
Thus by \cite[Lemma 6.2]{LR} we have
\begin{align*}
\ker\pi_{\infty} & =\ker\varphi_{T}=\overline{\newspan}\{g_{i,j}^{m}:= v_{i}^{*} v_{m}v_{m}^{*}(1-v^{*}v)v_{j} : i,j,m\in\N\} \\
\ker\pi_{\infty}^{*} & =\ker\varphi_{T^{*}}=\overline{\newspan}\{f_{i,j}^{m}:=v_{i} v_{m}^{*}v_{m}(1-vv^{*})v_{j}^{*} : i,j,m\in\N\}.
\end{align*}

For $n\in\N$, let $\pi_{n}$ and $\pi_{n}^{*}$ be the irreducible representations of $\c\times_{\tau}^{\piso}\N$ on the subspace $H_{n}$ of $\ell^{2}(\N)$,
that are induced by the partial-isometric representations
$k\mapsto P_{n}T_{k}P_{n}$ and $k\mapsto P_{n}T_{k}^{*}P_{n}$.
Let $L_{n}$ be the ideal of $(\c\times_{\tau}^{\piso}\N,v)$ generated by $\{v_{r}: r\ge n+1\}$.
Then $\pi_{n}$ is the representation
\[ \varepsilon_{n}\circ \pi: \c\times_{\tau}^{\piso}\N\stackrel{\pi}{\longrightarrow} \B \subset C_\textrm{b}(\N\cup\{\infty\},B(\ell^{2}(\N))_{*-\textrm{s}})
\stackrel{\varepsilon_{n}}{\longrightarrow} B(H_{n}), \]
and similarly $\pi_{n}^{*}=\varepsilon_{n}\circ \pi^{*}$.
So $\ker\pi_{n}\simeq\ker\varepsilon_{n}\simeq\ker\pi_{n}^{*}$.

\begin{prop}\label{prim-a0}
Let $n\in\N$, then
\begin{itemize}
\item[(a)] $\ker\pi_{n}=\ker\pi_{n}^{*}\simeq\ker\varepsilon_{n}=\{\xi \in \B : \xi(n)=0\}$;
\item[(b)] $\ker\pi_{\infty}\simeq \ker\pi_{\infty}^{*}= \{\xi\in \B : ~  ^{*}\!\!-{\rm strong}\lim_{n} \xi(n)=0\}$.
\end{itemize}
\end{prop}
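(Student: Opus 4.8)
The plan is to handle the finite levels $n\in\N$ and the level $\infty$ separately. In each case I first identify the kernel of $\pi_n$ (resp.\ $\pi_\infty$) with an evaluation ideal sitting inside $\mathcal{B}$ by pushing it through the faithful representation $\pi$, and only then do I establish the asserted coincidence or isomorphism with the starred version. The identification with an evaluation ideal is the easy half and is common to both parts: since $\pi_n=\varepsilon_n\circ\pi$ and $\pi$ is a faithful (hence isometric) representation onto $\pi(\c\times_{\tau}^{\piso}\N)\subseteq\mathcal{B}$, the ideal $\ker\pi_n=\pi^{-1}(\ker\varepsilon_n)$ is carried by $\pi$ isomorphically onto $\ker\varepsilon_n\cap\pi(\c\times_{\tau}^{\piso}\N)$, which is exactly the set $\{\xi\in\mathcal{B}:\xi(n)=0\}$ read inside the image. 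Running the same argument through $\pi^{*}$ handles $\pi_n^{*}$.

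The substantive point of (a) is the equality $\ker\pi_n=\ker\pi_n^{*}$, which I would obtain from a unitary conjugacy on the finite-dimensional space $H_n$. Let $U_n\in B(H_n)$ be the self-adjoint unitary flip $U_n e_r=e_{n-r}$ for $0\le r\le n$. Using that $\pi_n(v_k)=P_nT_kP_n$ acts on $H_n$ as the truncated forward shift $e_r\mapsto e_{r+k}$ (zero when $r+k>n$) while $\pi_n^{*}(v_k)=P_nT_k^{*}P_n$ acts as the truncated backward shift $e_r\mapsto e_{r-k}$ (zero when $r<k$), a one-line computation gives $U_n\pi_n(v_k)U_n^{*}=\pi_n^{*}(v_k)$ for every $k\in\N$. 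Since $\pi_n$ and $\pi_n^{*}$ are $\ast$-homomorphisms that, after conjugation by $U_n$, agree on the generators $v_k$ and their adjoints of $\c\times_{\tau}^{\piso}\N$, I conclude $\pi_n^{*}=\operatorname{Ad}(U_n)\circ\pi_n$. The two representations are therefore unitarily equivalent, so $\ker\pi_n=\ker\pi_n^{*}$, completing (a).

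For (b) I would first record that the evaluation-at-infinity map $\varepsilon_\infty\colon\mathcal{B}\to B(\ell^2(\N))$ sending $f$ to its $\ast$-strong limit $\lim_n f(n)$ is a $\ast$-homomorphism, with kernel exactly $\{\xi\in\mathcal{B}:\ \ast\text{-strong-}\lim_n\xi(n)=0\}$; this rests only on the $C^{*}$-structure of $C_{\mathrm b}(\N\cup\{\infty\},B(\ell^2(\N))_{\ast-\mathrm{s}})$ noted just before the proposition. Because $\pi_\infty=\operatorname{s-}\lim_n\pi_n$ and $\pi_n=\varepsilon_n\circ\pi$, for each $\xi$ we have $\pi_\infty(\xi)=\ast\text{-strong-}\lim_n\pi(\xi)(n)=\varepsilon_\infty(\pi(\xi))$, so $\pi_\infty=\varepsilon_\infty\circ\pi$ and, likewise, $\pi_\infty^{*}=\varepsilon_\infty\circ\pi^{*}$. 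Faithfulness of $\pi$ (resp.\ $\pi^{*}$) then identifies $\ker\pi_\infty$ (resp.\ $\ker\pi_\infty^{*}$) with the stated infinity-evaluation ideal.

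The remaining isomorphism $\ker\pi_\infty\simeq\ker\pi_\infty^{*}$ is where I expect the main obstacle. Here the clean flip argument of (a) is unavailable: $\pi_\infty=\varphi_T$ and $\pi_\infty^{*}=\varphi_{T^{*}}$ are the forward and backward shift representations on $\ell^2(\N)$, which are genuinely not unitarily equivalent (there is no flip on $\ell^2(\N)$). Instead I would route the isomorphism of kernels through Proposition~\ref{A-A0}, which supplies isomorphisms of both $\ker\varphi_T=\ker\pi_\infty$ and $\ker\varphi_{T^{*}}=\ker\pi_\infty^{*}$ onto the \emph{same} full corner $p\K(\ell^2(\N,\c))p$; composing them yields $\ker\pi_\infty\simeq\ker\pi_\infty^{*}$. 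Thus the asymmetry at level $\infty$ forces one to obtain the isomorphism from the concrete corner identification rather than from any conjugacy of the representations themselves.
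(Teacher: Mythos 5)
Your proposal is correct, but the route you take to the key equality $\ker\pi_n=\ker\pi_n^{*}$ in (a) is genuinely different from the paper's. The paper never invokes a unitary equivalence: it obtains (a) as a byproduct of the explicit spanning description in part (c) of the same proposition, by showing that $\J:=\overline{\operatorname{span}}\{f_{i,j}^{m}-g_{n-i,n-j}^{k}+\eta : 0\le i,j,m,k\le n,\ \eta\in L_{n}\}$ is an ideal, that $\J\subset\ker\pi_{n}$, and that $\ker\pi_{n}\subset\J$ via a matrix-units argument (the $f_{i,j}^{n}$ form matrix units, so any representation killing $\J$ factors through $\pi_{n}$); the equality $\ker\pi_{n}=\ker\pi_{n}^{*}$ then falls out because the two spanning sets coincide up to sign and reindexing. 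Your flip $U_{n}e_{r}=e_{n-r}$ gives $\operatorname{Ad}(U_{n})\circ\pi_{n}=\pi_{n}^{*}$ on the generators $v_{k}$, hence everywhere, since $\c\times_{\tau}^{\piso}\N$ is generated by the power partial isometry $v$; this is a shorter and cleaner way to get (a) on its own, but it does not produce the concrete spanning sets that the paper needs in any case for parts (c)--(e) and for the closure computations in Theorem \ref{set-prim}. For (b) you and the paper do essentially the same thing: $\pi_{\infty}=\varepsilon_{\infty}\circ\pi$ and $\pi_{\infty}^{*}=\varepsilon_{\infty}\circ\pi^{*}$ give the evaluation-at-infinity description, and the isomorphism $\ker\pi_{\infty}\simeq\ker\pi_{\infty}^{*}$ comes from Proposition \ref{A-A0}, both kernels being carried onto the same corner $p\K(\ell^{2}(\N,\c))p$; your remark that no conjugacy is available at level $\infty$ is apt, since indeed $\ker\varphi_{T}\neq\ker\varphi_{T^{*}}$ as sets. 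One small caveat, which applies equally to the paper's own formulation: identifying $\ker\pi_{n}$ with all of $\{\xi\in\B:\xi(n)=0\}$, rather than with its intersection with $\pi(\c\times_{\tau}^{\piso}\N)$, would require $\pi$ to map onto $\B$, which is not established; your hedge about reading the evaluation ideal inside the image is the honest version of the statement.
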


Furthermore
\begin{align*}
(c) & \ker \pi_{n}^{*}=\overline{\newspan}\{g_{i,j}^{m}-f_{n-i,n-j}^{k}+\eta : 0\le i,j,m,k\le n, \eta \in L_{n} \}, \\
    & \ker \pi_{n}=\overline{\newspan}\{f_{i,j}^{m}-g_{n-i,n-j}^{k}+\eta : 0\le i,j,m,k\le n, \eta \in L_{n} \}, \text{ and } \\
    & \ker \pi_{n}^{*}=\ker \pi_{n} \text{ for } n\in\N, \text{ in particular we have  } \ker \pi_{0}=\ker\pi_{0}^{*}=L_{0}; \\
(d) & \ker\pi_{n}|_{\ker\varphi_{T^{*}}}=\overline{\newspan}\{(f_{i,j}^{m}-f_{i,j}^{k})+ f_{x,y}^{z}: 0\le i,j,m,k\le n, \mbox{ one of } x,y,z \ge n+1 \}, \\
    & \ker\pi_{n}^{*}|_{\ker\varphi_{T}}=\overline{\newspan}\{(g_{i,j}^{m}-g_{i,j}^{k})+ g_{x,y}^{z}: 0\le i,j,m,k\le n, \mbox{ one of } x,y,z \ge n+1 \}, \\
    & \Theta_{*}^{-1}(\ker\pi_{n}|_{\ker\varphi_{T^{*}}})=\Theta^{-1}(\ker\pi_{n}^{*}|_{\ker\varphi_{T}}), \text{ and } \\
    & \ker\pi_{n}^{*}|_{\ker\varphi_{T}}\simeq \{ \textsf{a}\in \A : \textsf{a}(n)=0\}\simeq \ker\pi_{n}|_{\ker\varphi_{T^{*}}}; \\
(e) & \ker \pi_{n}^{*}|_{\I} = \overline{\newspan}\{g_{i,j}^{m}-g_{i,j}^{m+1}: 0\le i,j\le m \mbox{ \rm in } \N, \mbox{ \rm and } m\neq n\}
       = \ker \pi_{n}|_{\I} \\
    & =\overline{\newspan}\{f_{i,j}^{m}-f_{i,j}^{m+1}: 0\le i,j\le m \mbox{ \rm in } \N, \mbox{ \rm and } m\neq n\}
    \text{ is isomorphic to the ideal } \\
    & \{ \textsf{a}\in \A_{0} : \textsf{a}(n)=0\}.
\end{align*}

\begin{remark}\label{relateLR}
Note that the representations $\pi_{n}|_{\ker\varphi_{T^{*}}}$ and $\pi_{n}^{*}|_{\ker\varphi_{T}}$ are equivalent to the evaluation map
$\varepsilon_{n}:f\in \A \mapsto  f(n)\in B(H_{n})$ of $\A$ on $H_{n}$,
so we have $\ker\pi_{n}|_{\ker\varphi_{T^{*}}}\simeq \ker\pi_{n}^{*}|_{\ker\varphi_{T}}$ is isomorphic to $\{f\in\A : f(n)=0\}$, and
$\ker \pi_{n}|_{\I} =\ker \pi_{n}^{*}|_{\I}\simeq\{f\in\A_{0} : f(n)=0\}$; and
$\ker\pi_{\infty}\simeq \ker\pi_{\infty}^{*}\simeq \A$.
\end{remark}

\begin{proof}[Proof of Proposition \ref{prim-a0}]
Fix $n\in\N$.
We show for $\ker\pi_{n}$, and skip the proof for $\ker \pi_{n}^{*}$ because it contains the same arguments.
We clarify firstly that the space
\[ \J:=\overline{\newspan}\{f_{i,j}^{m}-g_{n-i,n-j}^{k}+\eta : 0\le i,j,m,k\le n, \eta \in L_{n} \} \]
is an ideal of $(\c\times_{\tau}\N,v)$  by showing $v\J\subset \J$ and $v^{*}\J\subset \J$.
Let $i=n$, then
\begin{align*}
v v_{k}v_{k}^{*}(1-v^{*}v)v_{n-j} =v (v^{*}v v_{k}v_{k}^{*})(1-v^{*}v)v_{n-j}
& =vv_{k}v_{k}^{*}v^{*}v(1-v^{*}v)v_{n-j} \\
& = v_{k+1}v_{k+1}^{*}(v-vv^{*}v)v_{n-j}=0,
\end{align*}
therefore
$v (f_{n,j}^{m}-g_{0,n-j}^{k}+\eta)  = v v_{n}v_{m}^{*}v_{m}(1-vv^{*})v_{j}^{*}- v v_{k}v_{k}^{*}(1-v^{*}v)v_{n-j}+v\eta
= f_{n+1,j}^{m}+v\eta$ belongs to $\J$ because $f_{n+1,j}^{m}\in L_{n}$.
If $0\le i\le n-1$, then $1\le i+1\le n$ and $n-i\ge 1$, and we have
\begin{align*}
vv_{n-i}^{*}v_{k}v_{k}^{*} & =vv^{*}v_{n-i-1}^{*}v_{n-i-1}v_{n-i-1}^{*}v_{k}v_{k}^{*}\\
& =v_{n-i-1}^{*}v_{n-i-1}vv^{*}v_{n-i-1}^{*}v_{k}v_{k}^{*} \\
& = v_{n-i-1}^{*}v_{n-i}v_{n-i}^{*}v_{k}v_{k}^{*}=v_{n-i-1}^{*}v_{\max\{n-i,k\}}v_{\max\{n-i,k\}}^{*},
\end{align*}
so
$v (f_{i,j}^{m}-g_{n-i,n-j}^{k}+\eta )= f_{i+1,j}^{m}-g_{n-(i+1),n-j}^{\max\{n-i,k\}}+v\eta \in \J$.

Now we check for $v^{*}\J$, and assume $i=0$, then
\[
v^{*}[f_{0,j}^{m}-g_{n,n-j}^{k}+\eta]  =v^{*}[v_{m}^{*}v_{m}(1-vv^{*})v_{j}^{*}-v_{n}^{*}v_{k}v_{k}^{*}(1-v^{*}v)v_{n-j}+\eta]
= 0 - g_{n+1,n-j}^{k}+v^{*}\eta \in\J
\]
because $g_{n+1,n-j}^{k}\in L_{n}$.
It follows by similar computations for $1\le i\le n$ that
\[ v^{*}[f_{i,j}^{m}-g_{n-i,n-j}^{k}+\eta] = f_{i-1,j}^{\max\{i,m\}}-g_{n-(i-1),n-j}^{k}+v^{*}\eta \in \J.\]

Next we show that $\J=\ker\pi_{n}$, one inclusion $\J\subset \ker\pi_{n}$ is clear  because
$\pi_{n}(f_{i,j}^{m})=\pi_{n}(g_{n-i,n-j}^{k})=T_{i}(1-TT^{*})T_{j}^{*}$ and
$L_{n}\subset\ker\pi_{n}$.
For the other inclusion, let $\sigma:\c\times_{\tau}^{\piso}\N \rightarrow B(H_{\sigma})$ be a nondegenerate representation with $\ker \sigma=\J$.
Note that $B(H_{n})=\newspan\{e_{ij}:=T_{i}(1-TT^{*})T_{j}^{*} : 0\le i,j\le n\}$.
Since $\{f_{i,j}^{n} : 0\le i,j \le n\}$ is a matrix-units for $B(H_{\sigma})$, there is a homomorphism $\psi$ of $B(H_{n})$ into $B(H_{\sigma})$ which
satisfies $e_{ij}\mapsto \sigma(f_{i,j}^{n})$.
Therefore $\sigma=\psi\circ\pi_{n}$, and hence  $\ker\pi_{n}\subset\ker\sigma=\J$.

Using the spanning elements of $\ker \pi_{n}$ and $\ker \pi_{n}^{*}$, and the equation
$f_{i,j}^{m}-g_{n-i,n-j}^{k}= - (g_{n-i,n-j}^{k}-f_{n-(n-i),n-(n-j)}^{m})$, we see that they contain each other, therefore
$\ker \pi_{n}=\ker \pi_{n}^{*}$ for every $n\in\N$.
The ideal $L_{0}$ is $\ker \pi_{0}=\ker\pi_{0}^{*}$ because $f_{0,0}^{0}-g_{0,0}^{0}= v^{*}v-vv^{*} \in L_{0}$.

For (d), let now $\J$ be $\overline{\newspan}\{(f_{i,j}^{m}-f_{i,j}^{k})+ f_{x,y}^{z}: 0\le i,j,m,k\le n, \mbox{ one of } x,y,z \ge n+1 \}$.
Then the same idea of calculations shows that $\J$ is an ideal of $\ker \varphi_{T^{*}}$, and
it is contained in $\ker\pi_{n}|_{\ker\varphi_{T^{*}}}$, then
for the other inclusion let $\sigma$ be a nondegenerate representation of $\ker \varphi_{T^{*}}$ such that $\ker \sigma=\J$,
get the homomorphism $\psi: B(H_{n})\rightarrow B(H_{\sigma})$ defined by $\psi(e_{ij})=\sigma(f_{i,j}^{n})$, and hence the equation
$\psi\circ\pi_{n}=\sigma$ implies that $\ker\pi_{n}|_{\ker\varphi_{T^{*}}}=\J$.
By computations on the spanning elements we see that the equation
$\Theta_{*}^{-1}(\ker\pi_{n}|_{\ker\varphi_{T^{*}}})=\Theta^{-1}(\ker\pi_{n}^{*}|_{\ker\varphi_{T}})$ is hold.
The same arguments work for the proof of (e), and we skip this.
\end{proof}

\begin{remark}
The map $n\in \N\cup \{\infty\} \mapsto I_{n}:=\ker \pi_{n}^{*}\in \Prim (\c\times_{\tau}^{\piso}\N)$ parameterizes the open subset
$\{P\in \Prim (\c\times_{\tau}^{\piso}\N) : \ker\varphi_{T}\simeq \A \not\subset P\}$ of $\Prim (\c\times_{\tau}^{\piso}\N)$ homeomorphic to
$\Prim \A$.
Note that the $\infty$ corresponds to the ideal $\ker\pi_{\infty}^{*}=\ker \varphi_{T^{*}}\in \Prim (\c\times_{\tau}^{\piso}\N)$, and it corresponds to
$\I=\ker \varphi_{T^{*}}|_{\ker\varphi_{T}}\in \Prim \A$.
\end{remark}

\begin{lemma}\label{property-In}
\begin{itemize}
\item [(i)] $\bigcap_{n=0}^{m} I_{n}=L_{m}$ for every $m\in\N$;
\item[(ii)] $\bigcap_{n\in\N} I_{n} = \{0\}$;
\item[(iii)] $\{0\}\varsubsetneq (\bigcap_{n>m}I_{n}) \subset \ker\pi_{\infty}^{*}\cap \ker\pi_{\infty}$ for every $m\in\N$.
\end{itemize}
\end{lemma}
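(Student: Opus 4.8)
The plan is to read all three statements off the machinery already in place, namely equation~(\ref{qn}), Lemma~\ref{Lnn}, Proposition~\ref{prim-a0}, and the faithfulness of $\pi$. The one fact I would fix at the outset is that for finite $n$ the ideal $I_n=\ker\pi_n^{*}$ equals $\ker\pi_n$ by Proposition~\ref{prim-a0}(c); this lets me identify $\bigcap_{r=0}^{m}I_r$ with $\bigcap_{r=0}^{m}\ker\pi_r$ and move freely between the two families of representations. Part~(i) is then immediate: equation~(\ref{qn}) gives $\ker q_m=\bigcap_{r=0}^{m}\ker\pi_r$, while Lemma~\ref{Lnn} identifies $\ker q_m$ with $L_m$, so $L_m=\bigcap_{r=0}^{m}\ker\pi_r=\bigcap_{r=0}^{m}I_r$ and nothing more is needed.

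For~(ii) I would invoke faithfulness of $\pi$. If $\xi\in\bigcap_{n\in\N}I_n=\bigcap_{n\in\N}\ker\pi_n$, then $\pi(\xi)(n)=\pi_n(\xi)=0$ for every finite $n$; since the value of $\pi(\xi)\in\B$ at $\infty$ is by definition the $*$-strong limit $\lim_n\pi_n(\xi)=0$, the whole element $\pi(\xi)$ vanishes, and faithfulness forces $\xi=0$. One could instead write $\bigcap_{n\in\N}I_n=\bigcap_m\bigl(\bigcap_{r=0}^{m}I_r\bigr)=\bigcap_m L_m$ by~(i) and run the same argument through Lemma~\ref{Lnn}, but the direct route is cleaner.

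For~(iii) there are two things to establish. The containment $\bigcap_{n>m}I_n\subset\ker\pi_\infty^{*}\cap\ker\pi_\infty$ I would obtain from the descriptions $\pi_\infty=\lim_n\pi_n$ and $\pi_\infty^{*}=\lim_n\pi_n^{*}$ in the strong topology: if $\xi\in\bigcap_{n>m}\ker\pi_n$ then, using $\ker\pi_n=\ker\pi_n^{*}$ once more, both sequences $(\pi_n(\xi))_n$ and $(\pi_n^{*}(\xi))_n$ are eventually zero, so their strong limits vanish and $\xi\in\ker\varphi_T\cap\ker\varphi_{T^{*}}=\ker\pi_\infty\cap\ker\pi_\infty^{*}$. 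For the strictness I would exhibit the explicit generator $f_{0,0}^{0}-f_{0,0}^{1}$ of $\I$. From $\pi_n(f_{0,0}^{m'})=e_{00}$ for $m'\le n$ and $\pi_n(v_r)=P_nT_rP_n=0$ for $r>n$ one computes $\pi_n(f_{0,0}^{0}-f_{0,0}^{1})=e_{00}-e_{00}=0$ for every $n\ge 1$, while $\pi_0(f_{0,0}^{0}-f_{0,0}^{1})=e_{00}-0=e_{00}\ne 0$. Thus this element is nonzero and lies in $\bigcap_{n\ge1}\ker\pi_n\subset\bigcap_{n>m}I_n$ for every $m\ge0$, whence $\{0\}\subsetneq\bigcap_{n>m}I_n$.

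The argument is essentially bookkeeping with the earlier results; the only points requiring care are the two passages to the value at $\infty$ (in~(ii) and in the containment half of~(iii)), where I must use that the entry of an element of $\B$ at $\infty$ is precisely the ($*$-)strong limit of its finite entries, together with the identity $\ker\pi_n=\ker\pi_n^{*}$, rather than any separate computation. I anticipate no genuine obstacle.
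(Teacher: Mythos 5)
Your proposal is correct, and parts (i) and the strictness witness in (iii) coincide with the paper's proof (note that your element $f_{0,0}^{0}-f_{0,0}^{1}$ equals the paper's $g_{0,0}^{0}-g_{0,0}^{1}$ by the identity $f_{i,j}^{m}-f_{i,j}^{m+1}=g_{m-i,m-j}^{m}-g_{m-i,m-j}^{m+1}$ quoted from \cite[Lemma 6.8]{LR}). The divergences are in (ii) and in the inclusion half of (iii). For (ii) the paper cites faithfulness of $\oplus_{i}(\pi_{J^{i}}^{\N}\times J^{i})$ from \cite[Corollary 5.5]{LR} together with $\ker q_{\infty}=\{0\}$, whereas you invoke faithfulness of $\pi:\c\times_{\tau}^{\piso}\N\to\B$ established in the paper's own proposition; these are essentially the same fact packaged differently, and both are legitimate. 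The real difference is in (iii): the paper never passes to the limit at $\infty$. Instead it restricts to $\ker\varphi_{T}\simeq\A$, observes that $\bigcap_{n>m}\ker(\pi_{n}^{*}|_{\ker\pi_{\infty}})\simeq\{f\in\A : f(n)=0\ \forall n>m\}\subset\A_{0}\simeq\I$, concludes $J\cap L\subset\ker\pi_{\infty}^{*}$ for $J=\bigcap_{n>m}I_{n}$ and $L=\ker\pi_{\infty}$, and then uses primeness of the primitive ideal $\ker\pi_{\infty}^{*}$ (ruling out $L\subset\ker\pi_{\infty}^{*}$) to get $J\subset\ker\pi_{\infty}^{*}$. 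Your route evaluates directly: eventual vanishing of $(\pi_{n}(\xi))_{n}$ and $(\pi_{n}^{*}(\xi))_{n}$ forces the $*$-strong limits $\varphi_{T}(\xi)$ and $\varphi_{T^{*}}(\xi)$ to vanish. This is shorter and avoids the prime-ideal argument, at the cost of leaning on the pointwise identity $\varphi_{T}=\varepsilon_{\infty}\circ\pi$ (i.e.\ $\varphi_{T}(\xi)={}^{*}\text{-s-}\lim_{n}\pi_{n}(\xi)$ for \emph{all} $\xi$, not just generators); the paper does assert this when it defines $\pi_{\infty}:=\lim_{n}\pi_{n}$, and it is justified because evaluation at $\infty$ is a $*$-homomorphism on $C_{\textrm{b}}(\N\cup\{\infty\},B(\ell^{2}(\N))_{*-\textrm{s}})$, so your argument is sound.
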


\begin{proof}
Part (i) follows from (\ref{qn}) and Lemma \ref{Lnn}.
For (ii), note that $q_{\infty}$ is the identity map on $\c\times_{\tau}^{\piso} \N$, and that
$\oplus_{i\in\N}\pi_{i}=(\oplus_{i\in\N}(\pi_{J^{i}}^{\N}\times J^{i}))\circ ~\id$.
So $\bigcap_{n\in\N} I_{n} = \{0\}$ by faithfulness of $\oplus_{i\in\N}(\pi_{J^{i}}^{\N}\times J^{i})$ \cite[Corollary 5.5]{LR}.

The inclusion $\bigcap_{n>m}I_{n}\subset \ker\pi_{\infty}^{*}$ for every $m\in\N$ follows from the next arguments:
\begin{align*}
\bigcap_{n>m}\ker(\pi_{n}^{*}|_{\ker \pi_{\infty}}) & \simeq
\{f\in \A : f(n)=0 ~ \forall ~ n>m\} \\
& \subset \{f\in A : \lim_{n\rightarrow \infty} f(n)=0\} \\
& = \A_{0} \simeq \ker\pi_{\infty}^{*}|_{\ker \pi_{\infty}} \subset \ker\pi_{\infty}^{*} \in \Prim \c\times_{\tau}^{\piso}\N,
\end{align*}
so the two ideals $J:=\bigcap_{n>m} I_{n}$ and $L:=\ker\pi_{\infty}$ of $\c\times_{\tau}^{\piso}\N$ satisfy $J\cap L\subset \ker\pi_{\infty}^{*}$,
therefore either $J\subset \ker\pi_{\infty}^{*}$ or $L \subset \ker\pi_{\infty}^{*}$, but the latter is not possible. To show $J\subset \ker\pi_{\infty}$, since $\ker \pi_{n}=\ker \pi_{n}^{*}$ for each $n$, we act similarly using the fact that
\begin{align*}
\bigcap_{n>m}\ker(\pi_{n}|_{\ker \pi_{\infty}^{*}}) & \simeq
\{f\in \A : f(n)=0 ~ \forall ~ n>m\} \subset \ker\pi_{\infty} \in \Prim \c\times_{\tau}^{\piso}\N.
\end{align*}

Therefore, $J \subset \ker\pi_{\infty}^{*}\cap \ker\pi_{\infty}$. Moreover, since $g_{0,0}^{0}-g_{0,0}^{1}\neq 0$ which satisfies $\pi_{n}^{*}(g_{0,0}^{0}-g_{0,0}^{1})=0$ for all $n\geq 1$, it follows that $\{0\}\varsubsetneq (\bigcap_{n>m}I_{n})$.
\end{proof}

\begin{remark}
Part (ii) of Lemma \ref{property-In} confirms with the fact that $\I$ is an essential ideal of $\c\times_{\tau}^{\piso}\N$ \cite[Lemma 6.8]{LR}.
\end{remark}

Next consider for $z\in \TT$, the character $\gamma_{z}\in \hat{\Z}\simeq \TT$ defined by $\gamma_{z}: m\mapsto \overline{z}^{m}$.
Note that the map $\gamma_{z}:k\in \N \mapsto \gamma_{z}(k)$ is a partial-isometric representation of $\N$ in $\C\simeq B(\C)$.
Consequently for each $z\in \TT$, we have a representation $\pi_{\gamma_{z}}\times\gamma_{z}$ of  $\c\times_{\tau}^{\piso}\N$ on $\TT$ such that
$\pi_{\gamma_{z}}\times\gamma_{z}(v_{k})=\gamma_{z}(k)=\overline{z}^{k}$ for $k\in\N$, and it is irreducible.
Moreover we know that the homomorphism $\Psi:\c\times_{\tau}^{\piso}\N\rightarrow C(\TT)$
is the composition of the Fourier transform
$\C\times_{\id}\Z\simeq C^{*}(\Z)\simeq C(\TT)$ with $\ell\times\delta^{*}: \c\times_{\tau}^{\piso}\N\rightarrow \C\times_{\id}\Z$, in which
$\ell:(x_{n})\in\c\mapsto \lim_{n} x_{n}\in\C$ and $\delta$ is the unitary representation of $\Z$ on $\C\times_{\id}\Z$.

\begin{lemma}\label{prim-CT}
For $z\in \TT$, the character $\gamma_{z}: k\mapsto \overline{z}^{k}$ in $\hat{\Z}\simeq \TT$ gives an irreducible representation
$\pi_{\gamma_{z}}\times \gamma_{z}$ of $\c\times_{\tau}^{\piso}\N$ on $\TT$ such that
$\pi_{\gamma_{z}}\times \gamma_{z}=\varepsilon_{z}\circ (\ell\times\delta^{*})$.
Denote by $J_{z}$ the primitive ideal $\ker \pi_{\gamma_{z}}\times \gamma_{z}$ of $\c\times_{\tau}^{\piso}\N$.
Then $\ker\pi_{\infty}$ and $\ker\pi_{\infty}^{*}$ are contained in $J_{z}$ for every $z\in \TT$.
Moreover every ideal $I_{n}$ for $n\in\N$ is not contained in any $J_{z}$.
\end{lemma}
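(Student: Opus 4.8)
The plan is to treat the four assertions in order, using throughout that $\pi_{\gamma_z}\times\gamma_z$ is a one-dimensional representation which factors through the quotient $\ell\times\delta^{*}$ onto $\C\times_{\id}\Z$.

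First I would note that, since $|\overline z|=1$, the map $\gamma_z\colon k\mapsto\overline z^{\,k}$ is a unitary (hence partial-isometric) representation of $\N$ on $\C$; by the property of the system $(\c,\N,\tau)$ recorded in the Example section it produces a covariant pair $(\pi_{\gamma_z},\gamma_z)$ with $\pi_{\gamma_z}(1_x)=\gamma_z(x)\gamma_z(x)^{*}=1$ and an integrated representation $\pi_{\gamma_z}\times\gamma_z$ acting on the one-dimensional space $\C$. As it is unital it is nonzero, and a nonzero representation on a one-dimensional space is automatically irreducible. To identify it with $\varepsilon_z\circ(\ell\times\delta^{*})$ it suffices to compare the two homomorphisms on the generators $v_k$, since these generate $\c\times_{\tau}^{\piso}\N$: the left side sends $v_k\mapsto\overline z^{\,k}$, while $(\ell\times\delta^{*})(v_k)=\delta_k^{*}$ corresponds under the Fourier isomorphism $C^{*}(\Z)\simeq C(\TT)$ to the function $w\mapsto\overline w^{\,k}$, whose value at $z$ is again $\overline z^{\,k}$; agreement on generators forces equality of the two representations.

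For the containment $\ker\pi_{\infty},\ker\pi_{\infty}^{*}\subseteq J_z$ I would exploit the factorization. Because $\pi_{\gamma_z}\times\gamma_z=\varepsilon_z\circ(\ell\times\delta^{*})$, its kernel $J_z$ contains $\ker(\ell\times\delta^{*})$; and since $\Psi$ is the Fourier isomorphism composed with $\ell\times\delta^{*}$, we have $\ker(\ell\times\delta^{*})=\ker\Psi$, which the proof of Proposition \ref{BGamma} identifies with $\ker\varphi_T+\ker\varphi_{T^{*}}=\ker\pi_{\infty}+\ker\pi_{\infty}^{*}$. Hence both $\ker\pi_{\infty}$ and $\ker\pi_{\infty}^{*}$ lie in $J_z$ for every $z$. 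Alternatively one can compute directly: writing $V=\overline z$, unitarity gives $1-V^{*}V=1-VV^{*}=0$, so $\pi_{\gamma_z}\times\gamma_z$ annihilates each spanning element $g_{i,j}^{m}$ and $f_{i,j}^{m}$.

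Finally, to show that no $I_n=\ker\pi_n^{*}$ is contained in any $J_z$, I would produce a single element of $I_n$ on which the character does not vanish, namely $v_{n+1}$. It lies in $L_n$, the ideal generated by $\{v_r:r\ge n+1\}$, and by Lemma \ref{Lnn} together with the equalities $\ker\pi_r=\ker\pi_r^{*}$ we have $L_n=\bigcap_{r=0}^{n}\ker\pi_r^{*}\subseteq\ker\pi_n^{*}=I_n$ (equivalently, $v_{n+1}$ occurs among the spanning elements of Proposition \ref{prim-a0}(c)); a one-line check gives $\pi_n^{*}(v_{n+1})=P_nT_{n+1}^{*}P_n=0$, confirming $v_{n+1}\in I_n$. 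On the other hand $(\pi_{\gamma_z}\times\gamma_z)(v_{n+1})=\overline z^{\,n+1}\neq0$, so $v_{n+1}\notin J_z$, whence $I_n\not\subseteq J_z$ for every $z\in\TT$. The only delicate point is the identification in the second step, where the Fourier convention must be fixed so that the two descriptions of the representation genuinely coincide on generators; the remaining steps are routine bookkeeping with the explicit spanning sets already recorded.
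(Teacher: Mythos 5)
Your proof is correct and follows essentially the same route as the paper's: identify $\pi_{\gamma_{z}}\times\gamma_{z}$ with $\varepsilon_{z}\circ(\ell\times\delta^{*})$ by comparison on the generators $v_{k}$, obtain the containments by annihilating the spanning elements $f_{i,j}^{m}$ and $g_{i,j}^{m}$ (your alternative via $J_{z}\supseteq\ker\Psi=\ker\varphi_{T}+\ker\varphi_{T^{*}}$ is a harmless repackaging of the same fact), and rule out $I_{n}\subseteq J_{z}$ using $(\pi_{\gamma_{z}}\times\gamma_{z})(v_{n+1})=\overline{z}^{\,n+1}\neq 0$. Your explicit check that $v_{n+1}\in\ker\pi_{n}^{*}=I_{n}$ is a detail the paper leaves implicit, and it is verified correctly.
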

\begin{proof}
By using the Fourier transform we can view $\C\times_{\id}\Z\simeq C^{*}(\Z)$ as $C(\TT)$, and it follows that
$v_{k}\in \c\times_{\tau}^{\piso}\N$ is mapped into the function $\iota_{k}: t\mapsto \overline{t}^{k} \in C(\TT)$.

We know that primitive ideals of $C(\TT)$ are given by the kernels of evaluation maps $\varepsilon_{t}(f)=f(t)$ for $t\in\TT$, and
the character $\gamma_{z}$ is a partial-isometric representation of $\N$ on $\TT$ for $z\in\TT$.
Then by inspection on the generators, we see that the representation $\pi_{\gamma_{z}}\times \gamma_{z}$ of  $\c\times_{\tau}^{\piso}\N$ on $\TT$ satisfies
$\pi_{\gamma_{z}}\times \gamma_{z}=\varepsilon_{z}\circ (\ell\times\delta^{*})$.
So the primitive ideal $J_{z}:=\ker \pi_{\gamma_{z}}\times \gamma_{z}$ of $\c\times_{\tau}^{\piso}\N$ is lifted from the quotient $(\c\times_{\tau}^{\piso}\N)/J \simeq C(\TT)$.

Since
$\pi_{\gamma_{z}}\times \gamma_{z}(f_{i,j}^{m})=0=\pi_{\gamma_{z}}\times \gamma_{z}(g_{i,j}^{m})$,
$\ker\pi_{\infty}=\ker\varphi_{T}$ and $\ker\pi_{\infty}^{*}=\ker\varphi_{T^{*}}$ are contained in $J_{z}$ for every $z\in\TT$.
Finally, since  $\pi_{\gamma_{z}}\times \gamma_{z}(v_{n+1})=\overline{z}^{n+1}\neq 0$ for $n\in \N$, $I_{n}\not\subset J_{z}$ for any $z\in\TT$.
\end{proof}

\begin{theorem}\label{set-prim}
The maps  $~n\in\N\cup \{\infty\}\cup \{\infty^{*}\} \mapsto I_{n}$ and $z\in\TT \mapsto J_{z}$ combine to give a bijection of the disjoint union
$\N\cup\{\infty\} \cup \{\infty^{*}\}\cup \TT$ onto $\Prim (\c\times_{\tau}^{\piso}\N)$, where $I_{\infty^{*}}:=\ker\varphi_{T}$.
Then the hull-kernel closure of a nonempty subset $F$ of

\[\N\cup\{\infty\} \cup \{\infty^{*}\}\cup \TT\]

is given by
\begin{itemize}
\item[(a)] the usual closure of $F$ in $\TT$ if $F\subset \TT$;
\item[(b)] $F$ if $F$ is a finite subset of $\N$;
\item[(c)] $F\cup\TT$ if $F \subset (\{\infty\} \cup \{\infty^{*}\})$;
\item[(d)] $F\cup (\{\infty\}\cup \{\infty^{*}\}\cup \TT)$ if $F\neq \N$ is an infinite subset of $\N$;
\item[(e)] $\N\cup \{\infty\}\cup \{\infty^{*}\}\cup \TT$ if $\N\subseteq F$.
\end{itemize}
\end{theorem}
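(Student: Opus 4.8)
The plan is to first establish the set-theoretic bijection and then read off each closure from the hull--kernel description $\overline{F}=\{P\in\Prim(\c\times_\tau^{\piso}\N) : \bigcap_{Q\in F}Q\subseteq P\}$. For the bijection I would exploit the middle-row extension of diagram~(\ref{diagram00}), namely $0\to\ker\varphi_T\to\c\times_\tau^{\piso}\N\xrightarrow{\varphi_T}\T(\Z)\to 0$, which splits $\Prim(\c\times_\tau^{\piso}\N)$ into the open set $U:=\{P : \ker\varphi_T\not\subseteq P\}\cong\Prim(\ker\varphi_T)$ and its closed complement $\cong\Prim\T(\Z)$. By Proposition~\ref{A-A0} the ideal $\ker\varphi_T$ is a full corner of $\K(\ell^2(\N,\c))\simeq\K(\ell^2(\N))\otimes\c$, hence Morita equivalent to $\c$, so $U$ is homeomorphic to $\Prim\c\cong\N\cup\{\infty\}$; the remark after Proposition~\ref{prim-a0} already records this parametrisation as $n\mapsto I_n$ for $n\in\N$ together with $\infty\mapsto I_\infty$. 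On the closed side $\Prim\T(\Z)=\{0\}\cup\TT$, and pulling back along $\varphi_T$ sends the point $\{0\}$ (the kernel of the faithful Toeplitz representation) to $\ker\varphi_T=I_{\infty^*}$ and the circle to the $J_z$ via $\Psi=\psi_T\circ\varphi_T$. Disjointness of $U$ from its complement, together with injectivity of the two homeomorphisms, then yields the claimed bijection.

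For the closures I would treat the five cases in increasing order of difficulty. Case~(e) is immediate: if $\N\subseteq F$ then $\bigcap_{Q\in F}Q\subseteq\bigcap_{n\in\N}I_n=\{0\}$ by Lemma~\ref{property-In}(ii), so the hull is all of $\Prim(\c\times_\tau^{\piso}\N)$. Case~(a) follows because the set $\{J_z : z\in\TT\}=\operatorname{hull}(\ker\Psi)$ is a closed subset homeomorphic to $\Prim C(\TT)=\TT$, and closure inside a closed subspace agrees with closure in the ambient space, giving exactly the usual $\TT$-closure. For case~(b), the finitely many $I_n$ with $n\in F$ are kernels of pairwise inequivalent finite-dimensional irreducibles, so $(\c\times_\tau^{\piso}\N)/\bigcap_{n\in F}I_n\cong\bigoplus_{n\in F}M_{n+1}(\C)$ by the Jacobson density theorem, whose primitive ideals are precisely the $I_n$ with $n\in F$.

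Cases~(c) and~(d) carry the real content. For~(c) I would compute the hull of each kernel directly: $\ker\varphi_{T^*}=I_\infty$ has quotient $\T(\Z)$, so its hull is $\{I_\infty\}\cup\TT$, and symmetrically for $I_{\infty^*}=\ker\varphi_T$; for $F=\{\infty,\infty^*\}$ the kernel is $\I=\ker\varphi_T\cap\ker\varphi_{T^*}$, which lies in $I_\infty$, in $I_{\infty^*}$, and by Lemma~\ref{prim-CT} in every $J_z$, but in no $I_n$ because $\I\cap I_n\subsetneq\I$ by Proposition~\ref{prim-a0}(e). For~(d) I would split along $U$ and its complement: the relative topology on $U\cong\N\cup\{\infty\}$, a one-point compactification of the discrete set $\N$, forces $\overline{F}\cap U=F\cup\{\infty\}$ and so excludes every $I_m$ with $m\notin F$; to show the entire closed complement lies in $\overline{F}$ it then suffices to prove $\bigcap_{n\in F}I_n\subseteq I_\infty\cap I_{\infty^*}\subseteq J_z$.

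The main obstacle will be this last inclusion for an \emph{arbitrary} infinite $F$, not merely a cofinite one where Lemma~\ref{property-In}(iii) applies directly. Here I would use that $\ker\pi_n=\ker\pi_n^*$ and that $\pi_\infty^*$ (respectively $\pi_\infty$) is the $*$-strong limit of $\pi_n^*$ (respectively $\pi_n$): for $a\in\bigcap_{n\in F}I_n$ we have $\pi_n^*(a)=0$ for every $n$ in the infinite set $F$, and since the full $*$-strong limit $\pi_\infty^*(a)$ exists in $\B$ it must coincide with this subsequential limit, forcing $\pi_\infty^*(a)=0$, i.e.\ $a\in I_\infty$; the same argument gives $a\in I_{\infty^*}$. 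This uniform statement, independent of which infinite proper subset $F$ is chosen, is exactly what makes the closure in~(d) constant across all such $F$, and verifying it cleanly---together with the separation used in~(b) and the non-containment $\I\not\subseteq I_n$ used in~(c)---is the delicate part of the argument.
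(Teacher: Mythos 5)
Your proposal is correct and follows the same overall architecture as the paper's proof: the bijection is read off from the decomposition of $\Prim(\c\times_{\tau}^{\piso}\N)$ along the extension $0\to\ker\varphi_{T}\to\c\times_{\tau}^{\piso}\N\to\T(\Z)\to 0$ together with Proposition \ref{prim-a0}, and cases (a), (c), (e) are handled exactly as in the paper via Lemmas \ref{property-In} and \ref{prim-CT}. You diverge in two places. For (b) the paper exhibits explicit separating elements --- $v_{n_{j}+1}$ lies in $\bigcap_{i}I_{n_{i}}$ but in no $J_{z}$, nor in $I_{\infty}$ or $I_{\infty^{*}}$, and $g_{0,0}^{n}-g_{0,0}^{n+1}$ lies in $\bigcap_{i}I_{n_{i}}$ but not in $I_{n}$ for $n\notin F$ --- whereas you identify the quotient by $\bigcap_{n\in F}I_{n}$ with a finite direct sum of matrix algebras; both are valid, the paper's being more concrete and yours more structural. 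The more substantive difference is (d): the paper simply cites Lemma \ref{property-In}(iii), which as stated concerns only the tails $\bigcap_{n>m}I_{n}$ and so does not literally cover an arbitrary infinite proper subset $F$ of $\N$ (for such $F$ one only gets $\bigcap_{n\in F}I_{n}\supseteq\bigcap_{n>m}I_{n}$ for $m<\min F$, which is the wrong direction). You correctly flag this and close it with the $*$-strong limit argument: if $\pi_{n}(a)=0$ for infinitely many $n$ and the $*$-strong limit of $\pi_{n}(a)$ exists, then $\varphi_{T}(a)=0$, and likewise for $\varphi_{T^{*}}$. The paper's own mechanism (restricting to $\ker\pi_{\infty}$, noting that $\{f\in\A : f(n)=0\ \forall\, n\in F\}\subset\A_{0}$ whenever $F$ is infinite, and invoking primeness of $\ker\pi_{\infty}^{*}$) does extend verbatim to arbitrary infinite $F$, so the gap is one of statement rather than substance, but your version is the cleaner and more direct fix, and it is the genuinely delicate point you rightly isolated.
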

\begin{proof}
The diagram \ref{diagram00} together with Proposition \ref{prim-a0} give a bijection map of
$\N\cup\{\infty\} \cup \{\infty^{*}\}\cup \TT$ onto $\Prim (\c\times_{\tau}^{\piso}\N)$.

Lemma \ref{property-In} (ii) gives the closure of the subset $F$ in (e), and Lemma \ref{property-In} (iii) gives the closure of the subset $F$ in (d).
If $F \subset (\{\infty\} \cup \{\infty^{*}\})$, then $\overline{F}=F\cup\TT$ because $\ker\pi_{\infty}^{*}, \ker\pi_{\infty}\subset J_{z}$ for every $z\in\TT$
by Lemma \ref{prim-CT}.

To see that $\overline{F}=F$ for a finite subset $F=\{n_{1},n_{2},\cdots,n_{j}\}$ of $\N$, we note that if an ideal $P\in\Prim (\c\times_{\tau}\N)$ satisfies
$\bigcap_{i=1}^{j}I_{n_{i}} \subset P$, then
\begin{itemize}
\item $P\neq J_{z}$ for any $z\in\TT$ because $v_{n_{j}+1}\in \bigcap_{i=1}^{j}I_{n_{i}}$ but $v_{n_{j}+1}\not\in J_{z}$ ;
\item $P\neq I_{\infty}, I_{\infty^{*}}$ because $v_{n_{j}+1}\in \bigcap_{i=1}^{j}I_{n_{i}}$ but $v_{n_{j}+1}\not\in I_{\infty}, I_{\infty^{*}}$;
\item $P\neq I_{n}$ for $n \not\in F$ because $(g_{0,0}^{n}-g_{0,0}^{n+1})\in \bigcap_{i=1}^{j}I_{n_{i}}$ but $(g_{0,0}^{n}-g_{0,0}^{n+1})\not\in I_{n}$ for $n \not\in F$.
\end{itemize}
So it can only be $P=I_{j}$ for some $j\in F$.
Finally the usual closure of $F$ in $\TT$ is followed by the fact that the map $z\mapsto J_{z}$ is a homeomorphism of $\TT$ onto the closed set $\Prim C(\TT)$.
\end{proof}

\end{document}